\renewcommand\appendix{\setcounter{secnumdepth}{-2}}
\setlist[enumerate]{leftmargin=*,label=\rm{(\arabic*)}}
\newtheorem{thm}{Theorem}[section]
\newtheorem{lem}[thm]{Lemma}
\newtheorem{prop}[thm]{Proposition}
\newtheorem{cor}[thm]{Corollary}
\newtheorem{re}[thm]{Remark}
\newtheorem{remarks}[thm]{Remarks}
\theoremstyle{remark}
\theoremstyle{definition}
\newtheorem*{re*}{Remark}
\newtheorem*{remarks*}{Remarks}
\newcommand{\ord}{\operatorname{ord}}
\newcommand{\leg}[2]{\left(\frac{#1}{#2}\right)}
\newcommand{\s}{\sigma}
\renewcommand{\a}{\alpha} 
\renewcommand{\b}{\beta} 
\newcommand{\g}{\gamma}
\newcommand{\e}{\varepsilon}
\renewcommand{\d}{\delta}
\renewcommand{\l}{\lambda}
\newcommand{\Om}{\Omega}
\newcommand{\De}{\Delta}
\newcommand{\Z}{{\mathbb Z}} 
\newcommand{\Q}{{\mathbb Q}} 
\newcommand{\F}{{\mathbb F}} 
\newcommand{\R}{{\mathbb R}}
\newcommand{\cG}{{\mathcal G}} 
\newcommand{\N}{\mathbb{N}}
\newcommand{\GL}{\operatorname{GL}}
\newcommand{\SO}{\operatorname{SO}}
\newcommand{\cond}{c}
\newcommand{\condd}{d}
\newcommand{\sm}{\setminus}
\renewcommand{\pmod}[1]{\ \left( \mathrm{mod} \, #1 \right)}
\newcommand{\Pmod}[1]{\ ( \mathrm{mod} \, #1 )}
\newcommand{\ol}[1]{\overline{#1}}
\newcommand{\sangle}[1]{\langle #1\rangle}
\newcommand{\pangle}[1]{\left\langle #1\right\rangle}
\newcommand{\Lr}[1]{\mathscr{L}^{[#1]}}
\numberwithin{equation}{section}
\numberwithin{table}{section}
\author{Kathrin Bringmann }
\address{University of Cologne, Department of Mathematics and Computer Science, Weyertal 86-90, 50931 Cologne, Germany}
\email{kbringma@math.uni-koeln.de}
\author{Zilong He }
\address{School of Computer Science and Technology, Dongguan University of Technology, Dongguan 523808, China}
\email{zilonghe@connect.hku.hk}
\author{Ben Kane}
\address{Department of Mathematics, University of Hong Kong, Pokfulam, Hong Kong}
\email{bkane@hku.hk}
\title[Class numbers of inhomogenous quadratic polynomials]{The mass of shifted lattices and class numbers of inhomogeneous quadratic polynomials}
\thanks{ }
\subjclass[2020]{11E08, 11E12, 11E41}
\date{\today}
\keywords{class numbers of shifted lattices, mass formula, quadratic polynomials}
\begin{document}
\begin{abstract}
	In this paper, we investigate class numbers of shifted quadratic lattices $L+\frac{\bm{u}}{\cond}$ with $\bm{u}\in L$ and odd conductor $\cond\in \N$. For a lattice $L$ whose genus only contains one class, we determine a lower bound for the number of classes in the genus of $L+\frac{\bm{u}}{\cond}$ depending on $\cond$. As a result, we obtain an explicit bound $\cond_0$ such that any such shifted lattice with one class in its genus must have conductor smaller than $\cond_0$, restricting the possible choices of such $L+\frac{\bm{u}}{\cond}$ to a finite set. 
\end{abstract}
\maketitle

\section{Introduction and statement of results}\label{sec:intro}

Positive-definite integral quadratic forms of rank $\ell$ over $\Z$ naturally split into equivalence classes under isometry (see Subsection \ref{sec:isometry}). For $\ell=2$, Lagrange \cite{Lagrange} showed that there exist only finitely many classes of binary quadratic forms of a given discriminant $D<0$. Gauss \cite[Section V, Articles 303--304]{Gauss} then defined the {\it class number} $h(D)$ as the number of classes with discriminant $D$ and for any $n\in\N$ asked for a classification of all $D<0$ with $h(D)=n$. He claimed that this set is finite because $h(D)\to\infty$ as $|D|\to\infty$ \cite[Section V, articles 303--304]{Gauss}. Gauss's claim was proven by Heilbronn \cite{Heilbronn}. Siegel \cite{siegel_anzahl_1936} then showed that $h(D)\gg|D|^{\frac12-\e}$ for any $\e>0$, but the lower bound was ineffective, so it did not resolve Gauss's class number problem for any given $n$. However, weaker effective bounds were later found and the case $n=1$ was independently solved by Stark \cite{Stark} (following ideas of Heegner) and Baker \cite{Baker}, with subsequent work culminating in the solution of Gauss's class number problem for $n\le100$ \cite{Baker2,Oesterle,Stark2,Watkins}.

For $\ell>2$, the discriminant is replaced by the determinant of the Gram matrix and one can similarly count the number of classes of a given determinant, which is again finite \cite[Chapter 9, Theorem 1.1]{Cassels}. The set of positive-definite quadratic forms with a given determinant further splits into genera, which consists of those forms that are $p$-adically isometric for every prime $p$. For a quadratic form $Q$, $h_Q$ counts the number of classes in the genus of $Q$ and one may investigate the analogous classification of genera with class number $h$. Watson \cite{WatsonSmallClassNumber} proved that $n=1$ can only occur for $\ell\le10$ and fully classified the case $n=1$ for $3\le\ell\le10$ in a series of papers\footnote{There were some omissions and a corrected list was compiled by Kirschmer and Lorch \cite{KL1}.} \cite{Watson1,Watson3,Watson2}. The case $n=2$ was solved by Kirschmer and Lorch \cite{KL}; Magnus \cite{Magnus} and Pfeuffer \cite{Pfeuffer} gave general lower bounds on the class number in terms of the rank $\ell$ and determinant $D$.

It is also natural to restrict $\bm x\in\Z^\ell$ appearing in \eqref{eqn:Qdef} to certain arithmetic progressions. For a quadratic form $Q$ and arithmetic progressions $x_j\equiv h_j\Pmod{N_j}$, we call the triple $(Q,\bm h,\bm N)$ a {\it quadratic form with congruence conditions}. There is an analogous theory of isometries for such quadratic forms with given congruence conditions. Hence we have a well-defined class number $h_{Q,\bm h,\bm N}$ for quadratic forms with congruence conditions $(Q,\bm h,\bm N)$ as well. For fixed $Q$, it is natural to investigate how the class number grows as a function of the {\it conductor}, which is the minimal $\cond\in\N$ such that $h_j\cond\equiv0\Pmod{N_j}$ for all $1\le j\le\ell$. The growth of these class numbers, if $\cond$ is an odd prime power with a certain technical condition depending on $Q$ and $\ell\ge3$, was studied by Sun \cite{sun_class_2016,sun_growth_2017}. Specifically, letting $d_Q$ denote the determinant of $Q$ (see \eqref{eqn:dLdef}), Sun assumed that $\gcd(d_Q,\cond)=1$. Denoting by $m_Q$ the Siegel mass of $Q$ (see \eqref{eqn:SiegelMassQ} and \eqref{eqn:SiegelMass}), we build on her work to obtain a lower bound for the class number if $\ell\ge3$ under a technical assumption.
\begin{thm}\label{thm:classbound}
	Suppose that $\ell\ge3$ and $(Q,\bm h,\bm N)$ is a positive-definite diagonal quadratic form with congruence conditions having odd conductor $\cond$ such that $\gcd(d_Q,Q(\bm{h}),\cond)=1$. Then, for each $\d>0$, there exists an explicit constant $C_\d>0$ (see \eqref{eqn:Cdeltadef}) such that
	\[
		h_{Q,\bm h,\bm{N}} \ge m_Q C_\d  \cond^{\ell-1-\d}.
	\]
\end{thm}

We next investigate the class number one problem for genera of quadratic forms with congruence conditions. To be more precise, one can use \Cref{thm:classbound} to restrict to those $Q$, $\bm h$, and $\bm N$ for which $h_{Q,\bm h,\bm N}=1$; such choices of $Q$, $\bm h$, and $\bm N$ have class number one. By \cite[Corollary 4.4]{chan_representations_2013}, $h_{Q,\bm h,\bm N}\ge h_Q$, so we may restrict ourselves to those class number one quadratic forms $Q$ classified by Watson.

\begin{cor}\label{cor:ClassNumberOne}
	Let $\bm h,\bm N\in\N^\ell$, and $Q$ is a positive-definite integral quadratic form of rank $\ell\ge3$ for which $(Q,\bm h,\bm N)$ has class number one\footnote{For each $3\le\ell\le10$, explicit bounds $\cond_\ell$ such that $\cond\le\cond_\ell$ are given in Table \ref{tab:ClassNumberOne}.}. If the conductor $\cond$ of $(Q,\bm h,\bm N)$ is odd and $\gcd(d_Q,Q(\bm{h}),\cond)=1$, then $\cond\le 81$.
\end{cor}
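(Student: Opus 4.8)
The plan is to combine \Cref{thm:classbound} with the finiteness of Watson's list of one-class forms. Since $(Q,\bm h,\bm N)$ has class number one, the inequality $h_{Q,\bm h,\bm N}\ge h_Q$ from \cite[Corollary 4.4]{chan_representations_2013} forces $h_Q=1$, so $Q$ must be one of the finitely many positive-definite integral forms of rank $3\le\ell\le10$ with one class in its genus classified by Watson (with the corrected list of Kirschmer and Lorch \cite{KL1}). For each such $Q$, applying \Cref{thm:classbound} with $h_{Q,\bm h,\bm N}=1$ gives
\[
	1\ge m_Q\, C_\d\,\cond^{\ell-1-\d},
\]
and hence the explicit upper bound $\cond\le(m_Q C_\d)^{-1/(\ell-1-\d)}$ for every admissible $\d>0$. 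Thus the problem reduces to making the right-hand side explicit over a finite set of forms and optimizing the free parameter $\d$.

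First I would make $m_Q$ explicit. Because $Q$ has one class in its genus, the Siegel mass in \eqref{eqn:SiegelMassQ} collapses to $m_Q=|O(Q)|^{-1}$, where $O(Q)$ is the finite automorphism group of $Q$; these orders are tabulated for every form on Watson's list, so $m_Q$ is a known rational number in each case. Substituting the relevant value together with the explicit formula for $C_\d$ from \eqref{eqn:Cdeltadef}, the bound on $\cond$ becomes a concrete function of the single parameter $\d$, which I would then minimize. Carrying this out form by form produces, for each rank $\ell$, a bound $\cond_\ell$ (recorded in Table \ref{tab:ClassNumberOne}); since $\cond$ is assumed odd, one may replace each $\cond_\ell$ by the largest odd integer not exceeding it. Taking the maximum of these finitely many bounds over all $Q$ and all $3\le\ell\le10$ yields $\cond\le81$.

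The main obstacle is the optimization over $\d$. The exponent $\ell-1-\d$ favors small $\d$, but the constant $C_\d$ in \eqref{eqn:Cdeltadef} degenerates as $\d\to0$, so a genuine balance must be struck, and the optimal $\d$ depends on both $\ell$ and $m_Q$. I expect the binding constraint to come from the lowest ranks, where the exponent $\ell-1$ is smallest and the resulting bound on $\cond$ is weakest; here one must check that even the most favorable automorphism-group order (largest $m_Q$), paired with the best admissible $\d$, does not push $\cond$ beyond $81$. For the higher ranks the exponent grows, so, with $C_\d$ controlled uniformly, the corresponding bounds $\cond_\ell$ should be smaller, and the overall estimate $\cond\le81$ then follows by taking the maximum over the finite list.
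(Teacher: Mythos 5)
Your proposal follows essentially the same route as the paper: reduce to Watson's one-class forms via $h_{Q,\bm h,\bm N}\ge h_Q$, apply the mass lower bound with $m_Q=|O(Q)|^{-1}$, and extract an explicit bound on $\cond$ from tabulated values of $|O(Q)|$ together with $C_\d$ (the paper simply fixes $\d=\tfrac12$, computes $C_{1/2}\ge 0.065$, and uses the maximal $|O(L)|$ within each rank rather than optimizing $\d$ form by form). One small correction: the binding case is the \emph{smallest} $m_Q$, i.e.\ the largest automorphism group order, not the largest $m_Q$ as you wrote, since the bound reads $\cond^{\ell-1-\d}\le (m_Q C_\d)^{-1}$.
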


\begin{remarks} \label{re:ClassNumberOne}
\noindent

\noindent
\begin{enumerate}
\item 	One motivation for classifying forms with (genus) class number one is a certain weighted average of Siegel \cite{siegel_uberdie_1935,SiegelIndefinite1,SiegelIndefinite2} and Weil \cite{Weil}. For $m\in\N$, they showed an explicit formula for the (weighted) average number of solutions to $Q(\bm x)=m$ with $Q$ running through representatives for the isometry classes in a given genus of integral quadratic forms. If the genus of $Q$ contains only one class, then this yields an identity for the number of solutions to $Q(\bm x)=m$. In this framework, a famous identity of Gauss (see e.g. \cite[Theorem 8.5]{OnoBook}) relating the number of solutions to $\sum_{j=1}^3x_j^2=m$ with class numbers of binary quadratic forms can be explained by the fact that the quadratic form $Q(\bm x)=\sum_{j=1}^3x_j^2$ has class number one.
	A number of similar formulas were obtained by two of the authors \cite{bringmann_class_2020} for counting solutions to $Q(\bm x)=m$ with congruence conditions $\bm x\equiv\bm h\Pmod{N}$ (for a fixed $N\in\N$). Since one would not expect such formulas to hold very often if the class number is not equal to one, Corollary \ref{cor:ClassNumberOne} yields a natural restricted search space for finding further identities.
\item
The class number of $(Q,\bm{h},\bm{N})$ is at least as large as the class number of $Q$, so $Q$ has class number one if $(Q,\bm{h},\bm{N})$ has class number one. By the classification of Kirschmer and Lorch \cite{KL1}, there are only finitely many choices of $Q$ and these may be explicitly listed. Moreover, for each $Q$, there are only finitely many choices of $\bm{h}$ and $\bm{N}$ with $\cond\leq 81$, so \Cref{cor:ClassNumberOne} implies that there are only finitely many triples $(Q,\bm{h},\bm{N})$ with class number one and $\gcd(2d_Q,2Q(\bm{h}),\cond)=1$. Since the bound $\cond\leq 81$ on the conductor is relatively small, one can easily use a computer to check all possible triples and give a full list of such class number one quadratic forms with congruence conditions for which $\gcd(2d_Q,2Q(\bm{h}),\cond)=1$. Indeed, if $\ell=3$ an algorithm to compute the genus of a given quadratic form with congruence conditions was given by Singhal \cite{Singhal} and hence for $\ell=3$ one only needs to run the provided code for each possible cases to obtain a full list for the ternary case. A generalization of \cite[Proposition 4.3]{Singhal} to higher rank should lead to an algorithm that can be used for $4\leq \ell\leq 10$. A more naive and slower algorithm involves simply listing all possible congruence conditions for each $Q$ and grouping them into genera by checking local conditions at every prime; although slower and less practical, this certainly work for all ranks. The computer computations necessary to list all such class number one forms appears to be a moderately-long, but feasible, calculation, due to the large number of class number one quadratic forms, so we have not worked out the details. However, for a given quadratic form, the calculation should be fairly straightforward, and it may be interesting to give a complete list.
\item We note that preliminary calculations indicate that the bound on $\cond$ is significantly worse if the condition $\gcd(2d_Q,2Q(\bm{h}),\cond)=1$ is dropped, at least with the techniques used in this paper. While a technical modification of the proof may lead to some finite bound, the primes dividing $\gcd(2d_Q,2Q(\bm{h}),\cond)$ would likely appear to some rather high power in the bound on $\cond$, so the resulting finite set of possible class number one quadratic forms with congruence conditions would be infeasible to check for a full enumeration of all class number one quadratic forms with congruence conditions.
\end{enumerate}
\end{remarks}

In order to study the number of classes in \Cref{thm:classbound}, we consider the Siegel mass. Following Minkowski's study on the mass for lattices, Siegel \cite{siegel_uberdie_1935,siegel_uberdie_1936,siegel_uberdie_1937} proved the mass formula, which expresses the mass as an infinite product of local factors over rational primes. Since the mass is a weighted sum over genus representatives, one obtains important information about the genus by evaluating and bounding these local factors, which are easier to compute because they only depend on $p$-adic properties of the lattice. Based on the notion of maximal lattices introduced by Eichler (see \cite[p.\hskip 0.1cm 1]{shimura_anexact_1999}), Shimura \cite{shimura_anexact_1999} pointed out that the mass formula for arbitrary lattices can be obtained via a relation (see \cite[Lemma 8.7 and the discussion at the end of Section 8]{shimura_anexact_1999}) with the mass formula for maximal lattices. Shimura then gave an exact mass formula for maximal lattices over a totally real number field \cite[Theorem 5.8]{shimura_anexact_1999}. Hanke \cite{hanke_anexact_2005} later extended Shimura's formula to arbitrary number fields. Recently, Sun formulated the mass formula for shifted lattices, and then used it to compute the class number of the weighted sums of ternary polygonal numbers \cite{sun_class_2016} and to study the growth of the class numbers of inhomogeneous quadratic polynomials over a totally real number field \cite{sun_growth_2017} if the conductor is odd. As a primary step towards \Cref{thm:classbound}, we extend Sun's work and also obtain explicit constants.

\begin{thm}\label{thm:diagonalclassformulaintro}
	Let $V$ be an $\ell$-dimensional $\Q$-vector space equipped with a positive-definite quadratic form, $L\subset V$ be a $\Z$-lattice corresponding to a positive-definite diagonal quadratic form $Q$, $\bm u\ne\bm0$ a primitive vector in $L$, and $\cond\in\N$ odd with $\gcd(d_L,Q(\bm{u}),\cond)=1$. Then there exists $C_\d$ (see \eqref{eqn:Cdeltadef}) such that
	\begin{equation*}\label{eqn:massratiobound}
		\frac{m\left(L+\frac{\bm u}{\cond}\right)}{m(L)} \ge C_\d \cond^{\ell-1-\d}.
	\end{equation*}
\end{thm}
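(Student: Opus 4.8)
The plan is to reduce the mass ratio to a finite product of local factors, one for each prime dividing $\cond$, and then to estimate these factors separately. By the mass formula for shifted lattices that we develop (extending Sun's work), the masses $m(L+\frac{\bm u}{\cond})$ and $m(L)$ each factor as an archimedean term times a product of local factors $\b_p$ over all rational primes $p$. Replacing $L$ by the coset $L+\frac{\bm u}{\cond}$ alters the $p$-adic data only at primes $p\mid\cond$, so the archimedean term and every factor with $p\nmid\cond$ cancel in the quotient, leaving
\[
	\frac{m\left(L+\frac{\bm u}{\cond}\right)}{m(L)}=\prod_{p\mid\cond}\frac{\b_p\left(L+\frac{\bm u}{\cond}\right)}{\b_p(L)}.
\]
It then suffices to bound each local ratio from below.

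For a prime $p\mid\cond$ with $p^{a_p}\,\|\,\cond$, I would identify the local ratio with the size of the orbit of the coset $\frac{\bm u}{\cond}+L_p$ under the local orthogonal group $O(L_p)$, equivalently the number of residues modulo $p^{a_p}$ on the sphere $Q(\bm x)\equiv Q(\bm u)\pmod{p^{a_p}}$. The hypothesis $\gcd(d_L,Q(\bm u),\cond)=1$ guarantees that, for each such $p$, either $L$ is $p$-unimodular or $Q(\bm u)\in\Z_p^\times$; combined with $p$ odd this places $\bm u$ on a nondegenerate sphere, and the local Witt extension theorem makes the orbit essentially the whole sphere. The standard point count then yields $p^{a_p(\ell-1)}$ times a correction factor $1+O(p^{-1})$, the correction being a Gauss sum that the diagonal form of $Q$ renders completely explicit; this explicitness is what ultimately makes $C_\d$ explicit.

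Multiplying over $p\mid\cond$ produces the main term $\prod_{p\mid\cond}p^{a_p(\ell-1)}=\cond^{\ell-1}$, so the task reduces to a lower bound for the product of correction factors. For $\ell\ge4$ these factors are $1+O(p^{-\lfloor\ell/2\rfloor})$ with $\lfloor\ell/2\rfloor\ge2$, so their product is bounded below by a convergent Euler product $\prod_p(1-p^{-j})=\zeta(j)^{-1}>0$. The binding case is $\ell=3$ (and any $p\mid d_L$, where the local structure can force a $p^{-1}$ term): here the correction factors $1-O(p^{-1})$ are controlled by the Mertens-type estimate $\cond/\phi(\cond)\ll_\d\cond^\d$, equivalently $\prod_{p\mid\cond}(1-p^{-1})\gg_\d\cond^{-\d}$. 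This is precisely where the loss $\cond^{-\d}$ and the $\d$-dependence of $C_\d$ enter.

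The main obstacle is the local computation at primes $p\mid\cond$ with $p\mid d_L$, where $L$ is not $p$-unimodular and the clean sphere-count heuristic breaks down. There one must analyze the Jordan splitting of $L_p$ and the position of the primitive vector $\bm u$ (which then necessarily satisfies $Q(\bm u)\in\Z_p^\times$) relative to it, and verify that the local orbit still has size $p^{a_p(\ell-1)}$ up to a correction bounded away from $0$ uniformly in $p$. Making this estimate effective, so that every implied constant can be tracked into the final $C_\d$, is the technical heart of the argument, and it is here that the diagonal hypothesis and the coprimality condition do the essential work.
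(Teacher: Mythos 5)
Your outline matches the paper's proof in all essentials: the mass ratio is factored into local indices $[\SO(L_p):\SO(L_p+\tfrac{\bm u}{\cond})]$ over $p\mid\cond$, each is bounded below by an orbit/sphere count modulo $p^{\cond_p}$ using local Witt-extension results (the James--Rosenzweig theory of associated vectors), the sphere count is made explicit via Li--Ouyang's formulas for diagonal forms, and the product of the resulting $1-O(1/p)$ corrections is handled by precisely the Mertens-type bound you describe, which is where $C_\d$ enters. The only differences are ones of emphasis: in the paper the primes $p\mid d_L$ are not the hard case (there the hypothesis forces $Q(\bm u)\in\Z_p^\times$, so $\bm u$ is simple of index $0$ and its orbit is immediately the whole sphere, with no Jordan-splitting analysis needed), and because a factor of $\tfrac12$ per prime is lost in passing from $O(L_p)$ to $\SO(L_p)$, the local factors do not tend to $1$ and the $\cond^{-\d}$ loss is genuinely needed for every $\ell$, not only for $\ell=3$.
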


\begin{re}
	For each $p\mid c$ (see \eqref{eqn:massrelationZilong}) the left-hand side of \Cref{thm:diagonalclassformulaintro} only depends on $L_p:=L\otimes\Z_p$. The assumption that $Q$ is diagonal is used in the proof of \Cref{thm:diagonalclassformulaintro} to bound the factors appearing for each prime $p\mid\cond$ on the right-hand side of \eqref{eqn:massrelationZilong}. The proof only requires the fact that $L_p$ is diagonalizable for every prime $p\mid\cond$. This is always the case for $p>2$ (see \cite[Subsection 4.3]{WatsonIntegral}). Thus the proof only uses the assumption that $L_2$ is diagonalizable. If $L$ is maximal, then, by the exact formula of $m(L)$ determined by Shimura \cite[Theorem 5.8]{shimura_anexact_1999}, one can bound the mass $m(L+\frac{\bm u}{\cond})$ from below by Theorem \ref{thm:diagonalclassformulaintro}.
\end{re}

The paper is organized as follows. In Section \ref{sec:notation}, we list notation used throughout the paper. In Section \ref{sec:prelim}, we recall some facts about shifted lattices and quadratic forms with congruence conditions. In Section \ref{sec:associates}, we give conditions for vectors to be associates over $\Z_p$. Lower bounds for the ratios of masses between lattices and shifted lattices are obtained in Section \ref{sec:mass}. Finally, the proofs of Theorem \ref{thm:classbound}, Corollary \ref{cor:ClassNumberOne}, and Theorem \ref{thm:diagonalclassformulaintro} are given in Section \ref{sec:proofmainresult}.

\section*{Acknowledgements}

The first author is supported by the Deutsche Forschungsgemeinschaft (DFG) Grant No. BR 4082/5-1. Part of the research was conducted while the second author was a Ph.D. student at the University of Hong Kong. The research of the third author was supported by grants from the Research Grants Council of the Hong Kong SAR, China (project numbers HKU 17303618 and 17314122). The authors thank Billy Chan and Rainer Schulze-Pillot for helpful comments.

\section{Notation}\label{sec:notation}

In this section we list commonly used notation for the convenience of the reader. 
\bgroup
\def\arraystretch{1.7}
\begin{longtable}{|c|c|}
\hline
notation&meaning/definition\\
\hline
$V$&quadratic space over a field $\F$ ($\F=\Q$, $\F=\Q_p$, or $\F=\R$),\\
& \hspace{-2.55cm}$\begin{array}{l}{}\\\vspace{-2cm}\\\text{endowed with a quadratic form }Q:V\to\F\end{array}$\\
\hline
$B(\bm{u},\bm{v})$& symmetric bilinear form $Q(\bm{u}+\bm{v})-Q(\bm{u})-Q(\bm{v})$\\
\hline
$L$ &$R$-lattice in $R=\Z$ or $R=\Z_p$\\
\hline
$d_L$&determinant of $L$ (see \eqref{eqn:dLdef})\\
\hline
$L+\frac{\bm{u}}{\cond}$ &shifted lattice ($\bm{u}\in L$, $\cond\in\N$)\\
\hline
$s(Q)$, $s(L)$&scale of $Q$, $L$ (see \eqref{eqn:scaledef} and \eqref{eqn:scaleLdef})\\
\hline
$n(Q)$, $n(L)$&norm of $Q$, $L$  (see \eqref{eqn:normdef} and \eqref{eqn:scaleLdef})\\
\hline
$O(V)$&isometries of $V$ (see \eqref{eqn:isometry})\\
\hline
$O\left(L+\frac{\bm{u}}{\cond}\right)$&isometries sending $L+\frac{\bm{u}}{\cond}$ to itself\\
\hline
$\SO(V)$, $\SO\left(L+\frac{\bm{u}}{\cond}\right)$&special orthogonal groups (see \eqref{eqn:SOdef})\\
\hline
$Q_L$& quadratic form associated to $L$ and a choice of generators (see \eqref{eqn:QLdef})\\
\hline
$\Q_p$&$p$-adic rationals\\
\hline
$\Z_p$&$p$-adic integers\\
\hline
$V_p$ &localization of $V$ from $\Q$ to $\Q_p$\\
\hline
$h_{L+\frac{\bm{u}}{\cond}}$&class number of $L+\frac{\bm{u}}{\cond}$ (see \eqref{eqn:classnumdef})\\
\hline
$h_{L+\frac{\bm{u}}{\cond}}^+$&proper class number of $L+\frac{\bm{u}}{\cond}$ (see \eqref{eqn:classnumdef})\\
\hline
$m\left(L+\frac{\bm{u}}{\cond}\right)$&Siegel mass of $L+\frac{\bm{u}}{\cond}$ (see \eqref{eqn:SiegelMass})\\
\hline 
$m^+\left(L+\frac{\bm{u}}{\cond}\right)$&proper Siegel mass of $L+\frac{\bm{u}}{\cond}$ (see \eqref{eqn:ProperSiegelMass})\\
\hline
$v(x)=v_p(x)$& $p$-adic order of $x\in\Q_p$\\
\hline
$\ord_p(n)$ &$p$-adic order of $n\in\Z$\\
\hline
$v(L;\bm{x})=v_p(L;\bm{x})$&$v(L;\bm{x}):=\min_{\bm{y}\in L} v\left(\frac{1}{2}B\left(\bm{x},\bm{y}\right)\right)$\\
\hline
$\Lr{r}$&$\Lr{r}:=\{\bm x\in L:v(L;\bm x)\ge r\}$\\
\hline
$\bm{u_p}$&$\bm{u_p}:=d^{-1}\bm{u}\in L_p$ for $\cond=p^kd$ with $p\nmid d$, $\Z$-lattice $L$, and $\bm{u}\in L$\\
\hline
$\Omega_{\cond}$ & $\Om_\cond:=\{p\text{ prime}:p\mid \cond\}$\\
\hline
$\Omega_{1,\cond}(L,\bm{u})$ & $\Om_{1,\cond}(L,\bm u):=\{p\in \Omega_{\cond}:Q(\bm{u})\in \Z_p^{\times}\text{ or }(L_p\text{ is unimodular and }Q(\bm{u})\in p\Z_p)\}$\\
\hline
$\Omega_{2,\cond}(L,\bm{u})$ &$p\in \Omega_{\cond}$ with $L_p$ not unimodular and $Q(\bm{u})\in p\Z_p$\\
\hline
$R\left(Q,\bm{u},p^k\right)$&$R\left(Q;\bm u,p^k\right):=\left\{\bm x\in L_p/p^kL_p:\bm x\notin pL_p,Q(\bm x)\equiv Q(\bm u)\pmod{p^k}\right\}$\\
\hline
$r\left(Q,\bm{u},p^k\right)$&$r\left(Q,\bm{u},p^k\right):=\left|R\left(Q,\bm{u},p^k\right)\right|$\\
\hline
$R\left(Q,\bm{u},r,p^k\right)$&$R\left(Q;\bm u,r,p^k\right):=\left\{\bm x\in R\left(Q;\bm u,p^k\right):\bm x\equiv\bm u\pmod{p^r}\right\}$\\
\hline
$r\left(Q,\bm{u},r,p^k\right)$&$r\left(Q,\bm{u},r,p^k\right):=\left|R\left(Q,\bm{u},r,p^k\right)\right|$\\
\hline
$d_{\bm{u},p}=d_{\bm{u},p}(Q)$&depth $d_{\bm{u},p}(Q):=\min_{j\in I_{\bm{u},p}}\left(\ord_{p}(a_j)+1\right)$\\
\hline
$R_S\left(Q;\bm u,p^k\right)$&$R_S\left(Q;\bm u,p^k\right):= \{\bm x\in\Z^\ell/p^k\Z^\ell : Q(\bm x)\equiv Q(\bm u)\pmod{p^k}$\\
&$\hspace{2.1in}\begin{array}{l}{}\\\vspace{-1.9cm}\\\text{ and }p\nmid x_j\text{ for all }j\in S\}\end{array}$\\
\hline
$r_{S}\left(Q;\bm{u},p^{k}\right)$ &$r_{S}\left(Q;\bm{u},p^{k}\right):=\# R_{S}\left(Q;\bm{u},p^{k}\right)$\\
\hline
$I_{p}=I_{\bm{a},p}$& $I_{\bm{a},p}:=\{1\leq j\leq \ell :p\nmid a_j\}$\\
\hline
$t_p$&$t_p:=\#I_p$\\
\hline
$\De_p$&$\De_p:=\leg{-1}{p}p$, where $\leg{-1}{p}$ is the Legendre symbol\\
\hline
$s_{J,p}=s_{\bm{a},J,p}$&$s_{\bm{a},J,p}:=\#\{j\in J: p\nmid a_j\}$\\
\hline
$r_{J,p}$&$r_{J,p}:=\# \left\{j\in J : \left(\tfrac{a_j}{p}\right)=-1\right\}$\\
\hline
$C_p(m,t_p,r_p)$&see \eqref{eqn:Cpmtprp}\\
\hline
$\cond_p$& $\cond_p:=\ord_p(\cond)$\\
\hline
$f_p=f_p(\cond_p)$&see \eqref{eqn:fpdef}\\
\hline
\end{longtable}
\egroup

\section{Preliminaries}\label{sec:prelim}

\subsection{Quadratic forms}\label{sec:quadraticforms}

Let $\F$ be a local or global field with characteristic different from $2$ and ring of integers $R$. For $\ell\in\N$, an {\it integral\hspace{.05cm}\footnote{One calls these quadratic forms ``integral'' because the corresponding Gram matrix has entries in $R$. If one relaxes the condition on $a_{jk}$ to $a_{jk}\in R$, then one gets $R$-valued quadratic forms, i.e., $Q(\bm x)\in R$ for $\bm x\in R$.} quadratic form of rank $\ell$ over $R$} is a homogeneous polynomial with $a_{jk}\in R$ and $a_{jk}\in 2R$ if $j\ne k$
\begin{equation}\label{eqn:Qdef}
	Q(\bm{x}):=\sum_{1\leq j\leq k\leq \ell} a_{jk} x_j x_k.
\end{equation}
A quadratic form $Q_1$ is {\it equivalent} to $Q$ over $R$ if there exists $M\in\GL_\ell(R)$ such that $Q_1(\bm{x})=Q(M\bm{x})$.
In particular for $m\in R$ the equation $Q(\bm x)=m$ with $\bm x\in R$ has the same number of solutions as $Q_1(\bm x)=m$ with $\bm x\in R$ because $M$ is invertible over $R$. If $Q(\bm x)>0$ for all $\bm x\in R^\ell\sm\{\bm0\}$, then $Q$ is {\it positive-definite}. The {\it symmetric bilinear form} on $R^\ell$ associated to $Q$ is defined by
\[
	B(\bm{x},\bm{y}):=Q(\bm{x}+\bm{y})-Q(\bm{x})-Q(\bm{y}).
\]
Taking $\bm x=\bm y$ gives the relation $Q(\bm x)=\frac12B(\bm x,\bm x)$. The assumption that $Q$ is integral is equivalent to assuming that the {\it scale of $Q$}, defined by (see \cite[82.8 (1)]{omeara_introduction_2000})
\begin{equation}\label{eqn:scaledef}
	s(Q) := \pangle{\tfrac12B(\bm x,\bm y) : \bm x,\bm y\in R^\ell} \subseteq R.
\end{equation}
Here $\sangle S$ means the ideal generated by $S$ over the ring $R$. Similarly, the {\it norm of $Q$} is the ideal
\begin{equation}\label{eqn:normdef}
	n(Q) := \pangle{Q\left(\bm x\right) : \bm x\in \Z^{\ell}}\subseteq s(Q).
\end{equation}
We assume throughout the paper that $s(Q)=R$ and are mainly interested in the case $R=\Z$.

\subsection{Quadratic lattices and isometry classes}\label{sec:isometry}

Fix an $\ell$-dimensional vector space $V$ over $\F$, called {\it quadratic space}, with associated symmetric bilinear form $B(\bm u,\bm v)$ from $V\times V$ to $\F$. Then $Q(\bm u):=\frac12B(\bm u,\bm u)$ is a quadratic form. A vector $\bm u\in V\sm\{0\}$ is {\it isotropic} if $Q(\bm u)=0$ and {\it anisotropic} otherwise. An {\it isometry of $V$} is a linear isomorphism $\s:V\to V$ satisfying, for every $\bm u,\bm v\in V$,
\begin{equation}\label{eqn:isometry}
	B\left(\sigma(\bm{u}),\sigma(\bm{v})\right)=B(\bm{u},\bm{v}),
\end{equation}
we denote the group of isometries of $V$ by $O(V)$. We use $O(V)$ to form equivalence classes for $R$-lattices $L\subseteq V$ and the {\it shifted lattices} $L+\frac{\bm u}{\cond}$, where $\bm u\in L$ and $\cond\in\N$. Note that if $\bm u\in \cond L$, then $L+\frac{\bm u}{\cond}=L$, so we may restrict to shifted lattices. Shifted lattices $L+\frac{\bm u}{\cond}$ and $K+\frac{\bm v}{\condd}$ are {\it isometric} (written $L+\frac{\bm u}{\cond}\cong K+\frac{\bm v}{\condd}$) if there exists $\s\in O(V)$ such that $K+\tfrac{\bm{v}}{\condd} =\sigma\left(L+\tfrac{\bm{u}}{\cond}\right)$.
If $\s$ is a bijection from $L+\frac{\bm u}{\cond}$ to itself, then we call $\s$ an {\it isometry of $L+\frac{\bm u}{\cond}$} and the isometry group of $L+\frac{\bm u}{\cond}$ is denoted by $O(L+\frac{\bm u}{\cond})$. We moreover define the {\it special orthogonal groups}
\begin{equation}\label{eqn:SOdef}
	\SO(V):=\{\sigma\in O(V):\det(\sigma)=1\}\quad\text{ and }\quad \SO\left(L+\tfrac{\bm{u}}{\cond}\right):=\left\{\sigma\in O\left(L+\tfrac{\bm{u}}{\cond}\right): \det(\sigma)=1\right\}.
\end{equation}
Note that 
\begin{equation}\label{eqn:OSOindex}
	\left[O\left(L+\tfrac{\bm u}{\cond}\right) : \SO\left(L+\tfrac{\bm u}{\cond}\right)\right] =
	\begin{cases}
		2 & \text{if there exists }\s\in O\left(L+\frac{\bm u}{\cond}\right),\ \det(\s)=-1,\\
		1 & \text{otherwise}.
	\end{cases}
\end{equation}
In particular,
\begin{equation}\label{eqn:OSOinequality}
	\left[O(L) : O\left(L+\tfrac{\bm u}{\cond}\right)\right] \le 2\left[\SO(L) : \SO\left(L+\tfrac{\bm u}{\cond}\right)\right].
\end{equation}
Elements $\bm u,\bm v\in L$ are {\it associated} if there exists $\s\in O(L)$ for which $\s(\bm u)=\bm v$. In this case, we write $\bm u\sim_L\bm v$. We omit the dependence on $L$ if it is clear from the context.

\subsection{Quadratic lattices and quadratic forms}\label{sec:LatticeQuadForms}

For a lattice $L\subset V$, we choose generators $\bm{u_1},\dots,\bm{u_\ell}\in L$ and associate a quadratic form $Q_L$ to $L$ by setting, for $\bm x\in R^\ell$, 
\begin{equation}\label{eqn:QLdef}
	Q_L(\bm{x})=Q_{L,\bm{u_1},\dots,\bm{u_{\ell}}}(\bm{x}):=Q\left(\sum_{j=1}^\ell x_j \bm{u_j}\right)=\tfrac12\sum_{j=1}^{\ell}\sum_{k=1}^{\ell} B\left(\bm{u_j},\bm{u_k}\right)x_jx_k. 
\end{equation}
Although $Q_L$ depends on the choice of generators, a different choice yields an equivalent quadratic form over $R$.
We therefore assume that the set of generators is fixed. One also sees directly that
\[
	\#\left\{\bm{x}\in R^{\ell}: Q_{L}(\bm{x})=m\right\} = \#\left\{\bm{u}\in L: Q(\bm{u})=m\right\}
\]
is independent of the choice of generators. Letting $G_L$ be the {\it Gram matrix} of $L$ with respect to the generators $\bm{u_1},\dots,\bm{u_{\ell}}$, whose $(j,k)$-th component is $\frac12B(\bm{u_j},\bm{u_k})$, we define the {\it determinant} of $L$ (and $Q_L$) by 
\begin{equation}\label{eqn:dLdef}
	d_{Q_L}:=d_L:=\det\left(G_L\right).
\end{equation}
The determinant is independent of the choice of generators, up to the square of a unit of $R$.
For $R=\Z$ the determinant is unique and for $R=\Z_p$ the quantities $\ord_p(d_L)$ and $(\frac{p^{-\ord_p(d_L)}d_L}{p})$ are well-defined. A lattice $L$ is called {\it unimodular (with respect to $R$)} if $d_L$ is a unit in $R$, and {\it integral} if $Q_L$ is integral. For an integral lattice, we define the {\it scale of $L$} and the {\it norm of $L$} by
\begin{equation}\label{eqn:scaleLdef}
	s(L) := s(Q_L),\qquad n(L) := n(Q_L).
\end{equation}

For quadratic spaces $V_j$ of dimensions $\ell_j$ $(1\leq j\leq k)$ with associated bilinear forms $B_j$, we define a quadratic space 
\[
	V=V_1\perp \cdots \perp V_k:=\left\{ \left(\bm{x}_1,\dots,\bm{x_k}\right): \bm{x_j}\in V_j\right\}
\]
of dimension $\sum_{j=1}^k \ell_j$, with component-wise addition and the associated bilinear form given by
\begin{equation}\label{eqn:orthogonalB}
	B\left(\left(\bm{x}_1,\dots,\bm{x_k}\right),\left(\bm{y}_1,\dots,\bm{y_k}\right)\right):=\sum_{j=1}^{k} B_j\left(\bm{x_j},\bm{y_j}\right).
\end{equation}
One naturally embeds $V_j$ into $V$ by setting $\bm{x_d}=\bm{0}$ for $d\neq j$; we abuse notation by writing $V_j$ for both the original quadratic space and its embedding. Note that $\bm{x_j}\in V_j$ and $\bm{x_d}\in V_d$ are orthogonal in $V$ if $j\neq d$, explaining the notation. Similarly, for lattices $L_j\subset V_j$ ($1\leq j\leq k$), write 
\begin{equation}\label{eqn:perpdef1}
	L_1\perp\dots\perp L_k := \{(\bm{x_1},\dots,\bm{x_k}) : \bm{x_j}\in L_j\} \subset V_1\perp\dots\perp V_k.
\end{equation}

From another perspective, suppose that $L_1,L_2,\dots,L_k$ are lattices contained in a quadratic space $V$ and $L_j$ and $L_r$ are orthogonal if $j\ne r$, i.e., $\bm{x_j}\in L_j$ and $\bm{x_r}\in L_r$ we have $B(\bm{x_j},\bm{x_r})=0$. We then abuse notation and also write
\begin{equation}\label{eqn:perpdef2}
	L_1\perp\cdots\perp L_k := \left\{\sum_{j=1}^k \bm{x_j} : \bm{x_j}\in L_j\right\}
\end{equation}
for the lattice spanned by $\{L_1,\dots,L_k\}$. The two definitions \eqref{eqn:perpdef1} and \eqref{eqn:perpdef2} are consistent in the sense that the lattice spanned by $\{L_1,\dots,L_k\}$ is a lattice in the quadratic space $\Q L_1\perp\cdots\perp\Q L_k$ and $\Q L_1\perp\cdots\perp\Q L_k$ naturally embeds into $V$ by the map $(a_1\bm{x_1},\dots,a_k\bm{x_k})\mapsto\sum_{j=1}^ka_j\bm{x_j}$. For $L_1\subseteq L$, we furthermore define the {\it orthogonal complement of $L_1$ in $L$} as
\[
	L_1^{\perp}:=\left\{\bm{u}\in L: B\left(\bm{u},\bm{u_1}\right)=0 \text{ for all } \bm{u_1}\in L_1\right\}.
\]
	
\subsection{Shifted lattices and quadratic forms}\label{sec:shifted}

Note that $L+\frac{\bm u}{\cond}$ only depends on $\bm u\in L/\cond L$.
For a shifted lattice $L+\frac{\bm u}{\cond}$, there similarly exists a corresponding quadratic form with congruence conditions (unique up to equivalence) which counts $\{ \bm{v}\in L+\tfrac{\bm{u}}{\cond}: Q(\bm{v})=m\}$. Choosing generators $\bm{u_1},\dots,\bm{u_\ell}$ of $L$, there is a one-to-one correspondence between $\bm v\in L+\frac{\bm u}{\cond}$ and $\bm x\in\Z^\ell$ given by $\bm v=\sum_{j=1}^\ell x_j\bm{u_j}+\frac{\bm u}{\cond}$. Thus
\begin{equation}\label{eqn:quadpoly}
	Q\left(\bm{v}\right)= \tfrac12\sum_{1\leq j,k\leq \ell} B\left(\bm{u_j},\bm{u_k}\right)x_jx_k + \sum_{1\leq j\leq \ell} \tfrac{x_j}{\cond} B\left(\bm{u_j},\bm{u}\right)+ \tfrac{1}{2\cond^2} B\left(\bm{u},\bm{u}\right).
\end{equation}
As noted in \cite[Section 4]{ChanRicci}, by completing the square, \eqref{eqn:quadpoly} becomes a quadratic form in a variable $\bm y$ with congruence conditions on $y_j$.

\subsection{Localization}

In this subsection, we restrict to $\F=\Q$. We associate with the {\it infinite prime} $\infty$ the usual absolute value $|x|_\infty:=|x|$. For a prime $p\in\N$ and $x\in\Q\sm\{0\}$, by unique prime factorization we have $x=p^r\frac ac$ for some $r\in\Z$ and $a,c\in\Z$ with $p\nmid ac$. We call $v_p(x):=r$ the {\it$p$-adic order} of $x$. For convenience, we set $v_p(0):=\infty$. The {\it$p$-adic absolute value} is $|x|_p:=p^{-v_p(x)}$. We let $\Q_p$ denote the $p$-adic completion of $\Q$. The {\it ring of $p$-adic integers} is defined as
\[
	\Z_{p}:=\left\{x\in \Q_{p}: v_p(x)\geq 0\right\}.
\]

With $\bm{u_1},\dots,\bm{u_n}$ a basis for a quadratic space $V$, for each finite prime $p$ we define the {\it localization of $V$ at the prime $p$} as
\[
	V_{p}:=V\otimes_{\Q} \Q_{p} = \bigoplus_{j=1}^\ell \Q_p \bm{u_j}.
\]
If $L\subset V$ is the lattice generated by $\bm{u_1},\dots,\bm{u_\ell}$ and $p$ is a finite prime, then the {\it localization of $L$ at $p$} is the $\Z_{p}$-lattice defined by 
\[
	L_{p}:=L\otimes_{\Z} \Z_{p}= \bigoplus_{j=1}^\ell \Z_{p} \bm{u_j}.
\]
The localization induces a (symmetric) bilinear form $B_p$ on $V_p$ by taking (for $\bm{x},\bm{y}\in \Q_p^\ell$)
\[
	B_p\left(\sum_{j=1}^\ell x_j\bm{u_j},\sum_{k=1}^\ell y_k\bm{u_k}\right) := \sum_{1\le j,k\le\ell} B\left(\bm{u_j},\bm{u_k}\right)x_jy_k.
\]
Note that if $\cond$ is invertible in $\Z_p$, then $L_p=\cond L_p$, and thus for any $\bm u\in L_p$ we have $\bm u\in\cond L_p$ and hence $L_p+\frac{\bm u}{\cond}=L_p$. At the infinite prime $\infty$, we analogously define 
\[
	V_{\infty}:=V\otimes_{\Q} \R = \bigoplus_{j=1}^{\ell} \R\bm{u_j}.
\]
The isometry class of a quadratic form over $\R$ is uniquely determined by the {\it signature} of $Q_L$, which is the pair $(s,t)$ such that $s$ (resp. $t$) is the largest dimension of the subspace of $V_\infty$ on which $Q_L$ is positive definite (resp. negative definite). We assume throughout that $Q_L$ is positive-definite.

\subsection{Classes and genera}\label{sec:PrelimClassesGenera}

Two shifted lattices $L+\frac{\bm{u}}{\cond}$ and $K+\frac{\bm{v}}{\condd}$ are in the same {\it class} (resp. in the same {\it proper class}) if there exists $\sigma\in O(V)$ (resp. $\sigma\in \SO(V)$) such that $\sigma\left(L+\tfrac{\bm{u}}{\cond}\right) = K+\tfrac{\bm{v}}{\condd}$. Moreover $L+\frac{\bm u}{\cond}$ and $K+\frac{\bm v}{\condd}$ are in the same {\it genus} (resp. {\it proper genus}) if $Q_L$ and $Q_K$ have the same signature and for every prime $p$ there exists $\s_p\in O(V_p)$ (resp. $\s_p\in\SO(V_p)$) such that $\sigma_p\left(L_p+\tfrac{\bm{u}}{\cond}\right) = K_p+\tfrac{\bm{v}}{\condd}$. Since $O(V)$ (resp. $\SO(V)$) embeds into $O(V_p)$ (resp. $\SO(V_p)$) for every prime $p$, the class (resp. proper class) is a subset of the genus (resp. proper genus). Hence it is natural to ask about the number of distinct classes (resp. proper classes) contained in the genus (resp. proper genus) of a given shifted lattice $L+\frac{\bm u}{\cond}$. We choose a set $\cG(L+\frac{\bm u}{\cond})$ (resp. $\cG^+(L+\frac{\bm u}{\cond})$) of representatives for the classes (resp. proper classes) in the genus (resp. proper genus). The {\it class number} $h_{L+\frac{\bm u}{\cond}}$ and {\it proper class number} $h_{L+\frac{\bm u}{\cond}}^+$ of $L+\frac{\bm u}{\cond}$ are then respectively defined by
\begin{equation*}\label{eqn:classnumdef}
	h_{L+\frac{\bm{u}}{\cond}}:=\#\cG\left(L+\tfrac{\bm{u}}{\cond}\right)\qquad\text{ and }\qquad h_{L+\frac{\bm{u}}{\cond}}^+:=\#\cG^+\left(L+\tfrac{\bm{u}}{\cond}\right).
\end{equation*}

\subsection{Siegel masses and the Siegel--Weil formula}

Since we assume throughout that $L$ is positive-definite, for a given shifted lattice $L+\frac{\bm{u}}{\cond}$ (with $\bm{u}\in L$ and $\cond\in\N$) the sets $O(L+\frac{\bm{u}}{\cond})$ and $\SO(L+\frac{\bm{u}}{\cond})$ are finite. In this case, the {\it Siegel mass} $m(L+\frac{\bm{u}}{\cond})$ and {\it proper Siegel mass} $m^+(L+\frac{\bm{u}}{\cond})$ of a shifted lattice $L+\frac{\bm{u}}{\cond}$ are defined by
\begin{align}\label{eqn:SiegelMass}
	m\left(L+\tfrac{\bm{u}}{\cond}\right)&:=\sum_{K+\frac{\bm{v}}{\condd}\in \cG\left(L+\frac{\bm{u}}{\cond}\right)} \frac{1}{\# O\left(K+\frac{\bm{v}}{\condd}\right)},\\
	\label{eqn:ProperSiegelMass}
	m^+\left(L+\tfrac{\bm{u}}{\cond}\right)&:=\sum_{K+\frac{\bm{v}}{\condd}\in \cG^+\left(L+\frac{\bm{u}}{\cond}\right)} \frac{1}{\#\SO\left(K+\frac{\bm{v}}{\condd}\right)}.
\end{align}
For the quadratic form with congruence conditions $(Q,\bm{h},\bm{N})$ related to $L+\frac{\bm{u}}{\cond}$ under the correspondence given in Subsection \ref{sec:shifted}, we also define 
\begin{equation}\label{eqn:SiegelMassQ}
	m_{Q,\bm h,N} := m\left(L+\tfrac{\bm u}{\cond}\right).
\end{equation}
We omit $\bm h$ and $N$ if $N=1$. Following \cite[proof of Theorem 3.6]{sun_growth_2017}), we have
\begin{equation}\label{eqn:massrelationZilong}
	m^+\left(L+\tfrac{\bm{u}}{\cond}\right)=m^+(L)\prod_{p}\left[\SO(L_p):\SO\left(L_p+\tfrac{\bm{u}}{\cond}\right)\right] = m^+(L)\prod_{p\mid \cond}\left[\SO(L_p):\SO\left(L_p+\tfrac{\bm{u}}{\cond}\right)\right].
\end{equation}
The first product runs over all primes and the second identity holds because $L_p+\frac{\bm{u}}{\cond} =L_p$ for $p\nmid \cond$.
By \cite[Lemma 4.1]{kane_universal_2018}, the Siegel mass and the proper Siegel mass are related by 
\begin{equation}\label{eqn:MassRelate}
	m^+\left(L+\tfrac{\bm{u}}{\cond}\right)=2m\left(L+\tfrac{\bm{u}}{\cond}\right).
\end{equation}
Using \eqref{eqn:MassRelate}, \eqref{eqn:SiegelMass} and \eqref{eqn:ProperSiegelMass} together with \eqref{eqn:massrelationZilong} imply that
\begin{align}\label{eqn:hlower}
	h_{L+\frac{\bm{u}}{\cond}} &\geq m\left(L+\tfrac{\bm{u}}{\cond}\right)=\tfrac12m^+\left(L+\tfrac{\bm{u}}{\cond}\right)=\tfrac12m^+(L)\prod_{p\mid \cond}\left[\SO(L_p):\SO\left(L_p+\tfrac{\bm{u}}{\cond}\right)\right],\\
	\nonumber 
	h_{L+\frac{\bm{u}}{\cond}}^+& \geq m^+\left(L+\tfrac{\bm{u}}{\cond}\right)=m^+(L)\prod_{p\mid \cond}\left[\SO(L_p):\SO\left(L_p+\tfrac{\bm{u}}{\cond}\right)\right].
\end{align}

\section{Associates over local rings}\label{sec:associates}

In this section, we fix an odd prime $p$ and abbreviate $v=v_p$. Let $L$ be a $\Z_p$-lattice in a $\Q_p$-vector space $V$ with associated symmetric bilinear form $B$. For $\bm{x}\in V$, define the {\it order of $\bm{x} $ in $ L $} by 
\[
	v(L;\bm{x})=v_p(L;\bm{x}):=\min_{\bm{y}\in L}v\left(\tfrac12B(\bm{x},\bm{y})\right).
\]
For $ r\in \N_0$, we also define the sublattice of $L$
\begin{equation}\label{eqn:Lrdef}
	\Lr{r} := \left\{\bm x\in L : v\left(L;\bm x\right)\ge r\right\}.
\end{equation}
Note that $L=\Lr{0}$, and hence in particular $v(\Lr{0};\bm{x})=v(L;\bm{x})$. The following properties are immediate from the definitions above. 

\begin{prop}\label{prop:ordervr}
For $r\in\N_0$ and $\bm x\in L$, the following hold.
	\begin{enumerate}[leftmargin=*,label=\textnormal{(\arabic*)}]
		\item We have 
		$
			v(\Lr{r};\bm x) \ge r.
		$
		
		\item We have, for $a\in\Z_p$, 
		\[
			v\left(\Lr{r};a\bm x\right) = v(a)+v\left(\Lr{r};\bm x\right).
		\]
		
		\item For $s\in\N_0$ with $s\geq r$, we have 
		\[
			v\left(\Lr{r};\bm x\right) \le v\left(\Lr{s};\bm x\right) \le v\left(\Lr{r};\bm x\right)+s-r.
		\]
	\end{enumerate}
\end{prop}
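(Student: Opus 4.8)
The plan is to prove all three assertions by directly unwinding the definition $v(\Lr{r};\bm x)=\min_{\bm y\in\Lr{r}}v(\tfrac12 B(\bm x,\bm y))$; everything reduces to bilinearity of $B$, symmetry of $B$, the valuation identity $v(ab)=v(a)+v(b)$, and the nesting of the sublattices $\Lr{r}$. I would prove part (2) first, since it is purely formal and gets reused in part (3). For (2), fix $a\in\Z_p$ and pull the scalar out of the bilinear form: for every $\bm y\in\Lr{r}$ we have $v(\tfrac12 B(a\bm x,\bm y))=v(a)+v(\tfrac12 B(\bm x,\bm y))$, and since $v(a)$ does not depend on $\bm y$ it factors out of the minimum, giving $v(\Lr{r};a\bm x)=v(a)+v(\Lr{r};\bm x)$ (the case $a=0$ being the trivial identity $\infty=\infty$).

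For part (1), I would use that $\bm x\in L$ together with the very definition of $\Lr{r}$: every $\bm y\in\Lr{r}$ satisfies $v(L;\bm y)\ge r$, i.e.\ $\min_{\bm z\in L}v(\tfrac12 B(\bm y,\bm z))\ge r$. Testing this minimum at the particular vector $\bm z=\bm x\in L$ and invoking the symmetry $B(\bm y,\bm x)=B(\bm x,\bm y)$ yields $v(\tfrac12 B(\bm x,\bm y))\ge r$ for \emph{every} $\bm y\in\Lr{r}$, and minimizing over such $\bm y$ gives $v(\Lr{r};\bm x)\ge r$. Likewise, the first inequality in part (3) is immediate from the containment $\Lr{s}\subseteq\Lr{r}$ for $s\ge r$, because shrinking the index set of a minimum can only increase its value, so $v(\Lr{r};\bm x)\le v(\Lr{s};\bm x)$.

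The one mild obstacle is the second inequality of part (3), $v(\Lr{s};\bm x)\le v(\Lr{r};\bm x)+s-r$. Here I would choose a minimizer $\bm y_0\in\Lr{r}$ realizing $v(\Lr{r};\bm x)=v(\tfrac12 B(\bm x,\bm y_0))$ and observe that the rescaled vector $p^{s-r}\bm y_0$ lies in $\Lr{s}$: indeed $v(L;p^{s-r}\bm y_0)=(s-r)+v(L;\bm y_0)\ge (s-r)+r=s$ by part (2) applied with $r=0$ (using $L=\Lr{0}$). Testing $\bm y=p^{s-r}\bm y_0$ in the minimum defining $v(\Lr{s};\bm x)$ then gives $v(\Lr{s};\bm x)\le v(\tfrac12 B(\bm x,p^{s-r}\bm y_0))=(s-r)+v(\Lr{r};\bm x)$, which is the claim. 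The only points needing care are that a minimizer exists whenever $v(\Lr{r};\bm x)$ is finite (the integer valuations are bounded below), while if $\bm x$ is orthogonal to $\Lr{r}$ both sides equal $\infty$ and the inequality holds trivially, and that the scaling factor $p^{s-r}$ is bookkept correctly via part (2).
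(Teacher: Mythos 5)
Your proof is correct and matches the paper's approach: the paper simply states that these properties are ``immediate from the definitions,'' and your argument is precisely the careful unwinding of those definitions (symmetry and bilinearity of $B$ for (1) and (2), the containment $\Lr{s}\subseteq\Lr{r}$ for the first half of (3), and testing the scaled minimizer $p^{s-r}\bm{y_0}\in\Lr{s}$ for the second half). The edge cases you flag (non-existence of a finite minimizer, $a=0$) are handled correctly and do not affect the conclusion.
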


An element $ \bm{x}\in L $ is called {\it primitive} if $ \bm{x}\notin p L $, {\it simple of index $ r $} if $ v(L;\bm{x})=v(\Lr{r};\bm{x})=r $, and {\it orthogonal} if $ v(L;\bm{x})=v(Q(\bm{x})) $. If $ v(L;\bm{x})=v(Q(\bm{x}))=r $, then $ \bm{x} $ is simple of index $ r $.

The following proposition gives a useful equivalent condition to check if a vector is simple.
 
\begin{prop}\label{prop:simpleequiv}
	An element $\bm x\in L$ is simple of index $r\in\N_0$ if and only if there exists $\bm w\in L$ such that $v(L;\bm x)=v(\frac12B(\bm x,\bm w))=v(L;\bm w)=r$. 
\end{prop}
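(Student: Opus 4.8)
The plan is to prove both implications by directly unwinding the definitions of $v(L;\cdot)$ and $v(\Lr{r};\cdot)$, using nothing more than the symmetry of $B$ and part (1) of \Cref{prop:ordervr}. Recall that $v(\Lr{r};\bm x)=\min_{\bm y\in \Lr{r}}v(\frac12 B(\bm x,\bm y))$ and that $\bm w\in\Lr{r}$ exactly means $v(L;\bm w)\ge r$; the whole argument amounts to carefully tracking which inequalities come from ``being a competitor in a minimum'' versus ``membership in $\Lr{r}$''.

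For the forward direction I would assume $\bm x$ is simple of index $r$, so that $v(L;\bm x)=v(\Lr{r};\bm x)=r$. Since the minimum defining $v(\Lr{r};\bm x)$ equals the finite integer $r$, it is attained, so I can pick a minimizer $\bm w\in\Lr{r}$; by construction this $\bm w$ satisfies $v(\frac12 B(\bm x,\bm w))=r$ and, from $\bm w\in\Lr{r}$, also $v(L;\bm w)\ge r$. Thus $\bm w$ already realizes two of the three required equalities (together with the assumed $v(L;\bm x)=r$), and the only thing left is to sharpen $v(L;\bm w)\ge r$ into $v(L;\bm w)=r$.

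This sharpening is the one genuinely substantive step, and it is where I expect the only (mild) obstacle to lie. The idea is to feed $\bm x$ itself back into the minimum defining $v(L;\bm w)$: because $\bm x\in L$ is an admissible competitor and $B$ is symmetric, one gets $v(L;\bm w)\le v(\frac12 B(\bm w,\bm x))=v(\frac12 B(\bm x,\bm w))=r$. Combined with $v(L;\bm w)\ge r$ this forces $v(L;\bm w)=r$, completing the forward direction. The key observation is simply that the hypothesis $\bm x\in L$ (not merely $\bm x\in\Lr{r}$) lets $\bm x$ serve as a test vector for $\bm w$.

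For the converse I would assume such a $\bm w\in L$ exists. Since $v(L;\bm w)=r\ge r$, we have $\bm w\in\Lr{r}$, so $\bm w$ is an admissible competitor in the minimum defining $v(\Lr{r};\bm x)$, giving $v(\Lr{r};\bm x)\le v(\frac12 B(\bm x,\bm w))=r$. The reverse inequality $v(\Lr{r};\bm x)\ge r$ is precisely \Cref{prop:ordervr}(1) applied to $\bm x\in L$. Hence $v(\Lr{r};\bm x)=r$, and since $v(L;\bm x)=r$ is assumed, $\bm x$ is simple of index $r$. No real obstacle arises here; the converse is a short bookkeeping argument, and the entire proposition hinges only on the symmetry-plus-membership trick used in the forward direction.
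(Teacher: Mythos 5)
Your proof is correct and follows essentially the same route as the paper's: in the forward direction you take a minimizer $\bm w\in\Lr{r}$ of the quantity defining $v(\Lr{r};\bm x)$ and then use $\bm x\in L$ as a test vector (via symmetry of $B$) to pin down $v(L;\bm w)=r$, which is exactly the paper's chain $r\le v(L;\bm w)\le v(\tfrac12B(\bm x,\bm w))=r$; the converse likewise matches the paper's use of $\bm w\in\Lr{r}$ together with \Cref{prop:ordervr}(1). No gaps.
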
 

\begin{proof}
	Suppose that $\bm{x} $ is simple of index $r$. Then by definition,
	\[
		v(L;\bm x) = v\left(\Lr{r};\bm x\right) = \min_{\bm y\in\Lr{r}} v\left(\tfrac12B(\bm x,\bm y)\right) = r.
	\]
	Thus there exists $ \bm{w}\in \Lr{r} $ such that $ v(\tfrac12B(\bm{x},\bm{w}))=r $. Hence 
	\begin{align*}
		r\le v(L;\bm{w})\le v\left(\tfrac12B(\bm{x},\bm{w})\right)=r=v(L;\bm{x}).
	\end{align*}
	Thus we must have equality throughout.	Conversely, suppose that there exists $\bm{w}\in L$ such that
	\[
		r=v(L;\bm{x})=v\left(\tfrac12B(\bm{x},\bm{w})\right)=v(L;\bm{w}).
	\]
	Since $v(L;\bm{w})=r$, we have $ \bm{w}\in \Lr{r} $ by definition. Combining this with Proposition \ref{prop:ordervr} (1) gives
	\[
		r\le v\left(\Lr{r};\bm{x}\right)\le v\left(\tfrac12B(\bm{x},\bm{w})\right)=r.
	\]
	Hence $ v(L;\bm{x})=v(\Lr{r};\bm{x})=r $ and thus $ \bm{x} $ is simple of index $r$.
\end{proof}
 
For $\bm x\in L$, following \cite[p.\hskip 0.1cm 908]{james_integral_1965}, we call $r\in\N_0$ a {\it critical index} of $\bm x$ if one of the following holds:
\begin{enumerate}[leftmargin=*,label=\textnormal{(\arabic*)}]
	\item We have $ r=0 $ and 
	\[
		v(L;\bm{x})<v\left(\Lr{1};\bm{x}\right).
	\]
	
	\item We have $ r>0 $ and
	\[
		v\left(\Lr{r-1};\bm{x}\right)=v\left(\Lr{r};\bm{x}\right)<v\left(\Lr{r+1};\bm{x}\right).
	\]
\end{enumerate}
Note that this definition agrees with the definition from \cite[p.\hskip 0.1cm 298]{james_associated_1968} by Proposition \ref{prop:ordervr} (3).

We next investigate the critical indices of vectors, starting with simple vectors.

\begin{lem}\label{lem:simplecritical}
	If $\bm x$ is simple of index $r$, then $r$ is the unique critical index of $\bm x$.
\end{lem}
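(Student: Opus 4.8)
The plan is to determine the full sequence $s\mapsto v(\Lr{s};\bm x)$ explicitly and then read off the critical indices directly. By hypothesis, $\bm x$ simple of index $r$ means exactly $v(\Lr{0};\bm x)=v(L;\bm x)=r$ and $v(\Lr{r};\bm x)=r$. Everything will reduce to understanding how the single number $r$ controls the whole sequence.

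The key step is to prove that $v(\Lr{s};\bm x)=\max(r,s)$ for every $s\in\N_0$. On the range $0\le s\le r$, the left-hand inequality in Proposition \ref{prop:ordervr} (3) shows that $s\mapsto v(\Lr{s};\bm x)$ is non-decreasing; since its values at $s=0$ and $s=r$ both equal $r$, it must be constantly $r$ on this range. On the range $s\ge r$, Proposition \ref{prop:ordervr} (1) gives the lower bound $v(\Lr{s};\bm x)\ge s$, while applying the right-hand inequality in Proposition \ref{prop:ordervr} (3) to the pair $(r,s)$ gives $v(\Lr{s};\bm x)\le v(\Lr{r};\bm x)+s-r=s$; hence $v(\Lr{s};\bm x)=s$ there. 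Together these yield $v(\Lr{s};\bm x)=\max(r,s)$.

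With this formula in hand, I would verify the critical index claim directly against the definition. For the index $r$ itself: if $r=0$ then $v(\Lr{0};\bm x)=0<1=v(\Lr{1};\bm x)$, and if $r>0$ then $v(\Lr{r-1};\bm x)=r=v(\Lr{r};\bm x)<r+1=v(\Lr{r+1};\bm x)$, so $r$ is critical in either case. For the non-criticality of every other index $r'$: if $0<r'<r$, the three values $v(\Lr{r'-1};\bm x)$, $v(\Lr{r'};\bm x)$, $v(\Lr{r'+1};\bm x)$ all equal $r$, so no jump occurs; if $r'=0<r$ then $v(\Lr{0};\bm x)=r=v(\Lr{1};\bm x)$, so again no jump; and if $r'>r$ then $v(\Lr{r'-1};\bm x)=r'-1<r'=v(\Lr{r'};\bm x)$, so the equality $v(\Lr{r'-1};\bm x)=v(\Lr{r'};\bm x)$ required for criticality fails. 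This leaves $r$ as the unique critical index.

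The only genuine content is computing $v(\Lr{s};\bm x)=\max(r,s)$; the rest is bookkeeping against the two cases in the definition of critical index. The hard part, such as it is, will be keeping the two parts of Proposition \ref{prop:ordervr} aligned with the correct index pairs, since the governing inequalities point in opposite directions on the two ranges $s\le r$ and $s\ge r$.
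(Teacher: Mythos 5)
Your proof is correct and takes essentially the same route as the paper's: the paper's one-line argument also derives $v(\Lr{s};\bm x)=s$ for $s\ge r$ from Proposition \ref{prop:ordervr} (1) and (3) and reads off the critical indices from the resulting sequence. You have simply made explicit the formula $v(\Lr{s};\bm x)=\max(r,s)$ and the case-by-case check against the definition that the paper leaves implicit.
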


\begin{proof}
	By Proposition \ref{prop:ordervr} (1), (3), for $s\ge r$ we have $v(\Lr{s};\bm x)=s$ and hence $r$ is the unique critical index of $\bm x$ if it is simple of index $r$.\qedhere
\end{proof}

If $L=M\perp M^\perp$, then we say that $M$ {\it splits} $L$. James \cite[Lemma 1]{james_integral_1965} proved the following.

\begin{lem}\label{lem:splitting1}
	For $\bm x\in L$, the rank-one sublattice $\Z_p\bm{x}$ of $L$ splits $L$ if and only if $v(L;\bm x)=v(Q(\bm x))$. 
\end{lem}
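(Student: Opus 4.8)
The plan is to prove both implications of the equivalence, starting from the observation that one inequality is automatic: taking $\bm y=\bm x$ in the definition of $v(L;\bm x)$ gives $v(L;\bm x)\le v(\frac12 B(\bm x,\bm x))=v(Q(\bm x))$. Hence the entire content of the hypothesis $v(L;\bm x)=v(Q(\bm x))$ is the \emph{reverse} bound $v(\frac12 B(\bm x,\bm y))\ge v(Q(\bm x))$ for every $\bm y\in L$, and I expect this to be precisely the $p$-adic integrality statement needed to construct the orthogonal splitting. Throughout I will use that $p$ is odd, so $v(2)=0$ and therefore $v(B(\bm x,\bm y))=v(\frac12 B(\bm x,\bm y))$.

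For the forward direction, suppose $\Z_p\bm x$ splits $L$, so $L=\Z_p\bm x\perp M$ with $M=(\Z_p\bm x)^\perp$. Writing an arbitrary $\bm y\in L$ as $\bm y=a\bm x+\bm m$ with $a\in\Z_p$ and $\bm m\in M$, orthogonality of $\bm m$ to $\bm x$ gives $\frac12 B(\bm x,\bm y)=aQ(\bm x)$, whence $v(\frac12 B(\bm x,\bm y))=v(a)+v(Q(\bm x))\ge v(Q(\bm x))$, with equality at $a=1$. Minimizing over $\bm y\in L$ yields $v(L;\bm x)=v(Q(\bm x))$, as desired.

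For the reverse direction, assume $v(L;\bm x)=v(Q(\bm x))=:r$. As the ambient form is nondegenerate, $v(L;\bm x)$ is finite for $\bm x\ne\bm 0$ (the case $\bm x=\bm 0$ being trivial), so $r<\infty$ and $Q(\bm x)\ne 0$; I may therefore form the Gram--Schmidt projection $\pi(\bm y):=\bm y-\frac{B(\bm x,\bm y)}{2Q(\bm x)}\bm x$, which by construction satisfies $B(\bm x,\pi(\bm y))=0$. The decisive step is to verify that $\pi$ maps $L$ into $L$, i.e.\ that the coefficient $\frac{B(\bm x,\bm y)}{2Q(\bm x)}$ is a $p$-adic integer for every $\bm y\in L$: using $v(2)=0$ and the hypothesis, $v\!\left(\frac{B(\bm x,\bm y)}{2Q(\bm x)}\right)=v\!\left(\frac12 B(\bm x,\bm y)\right)-v(Q(\bm x))\ge v(L;\bm x)-r=0$. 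Granting this, each $\bm y\in L$ decomposes as $\pi(\bm y)+\frac{B(\bm x,\bm y)}{2Q(\bm x)}\bm x$ with $\pi(\bm y)\in M:=(\Z_p\bm x)^\perp$, so $L=\Z_p\bm x+M$; the sum is direct and orthogonal since $a\bm x\in M$ forces $2aQ(\bm x)=B(\bm x,a\bm x)=0$ and hence $a=0$. Thus $\Z_p\bm x$ splits $L$.

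The main obstacle — indeed essentially the only substantive point — is the integrality of the projection coefficient in the reverse direction. Everything rests on translating ``$\bm x$ is orthogonal'' ($v(L;\bm x)=v(Q(\bm x))$) into the uniform estimate $v(\frac12 B(\bm x,\bm y))\ge v(Q(\bm x))$, together with the oddness of $p$ to discard the factor of $2$; were $p=2$, the denominator $2Q(\bm x)$ could carry an extra power of $2$ and the projection would fail to land in $L$.
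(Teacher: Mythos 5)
Your proof is correct. Note that the paper does not prove this lemma at all: it is quoted as \cite[Lemma 1]{james_integral_1965}, so there is nothing internal to compare against. Your argument is the standard one and is essentially James's: the inequality $v(L;\bm x)\le v(Q(\bm x))$ comes for free from $\bm y=\bm x$, the forward direction is the computation $\tfrac12 B(\bm x,a\bm x+\bm m)=aQ(\bm x)$, and the reverse direction is the integrality of the Gram--Schmidt coefficient $\tfrac{B(\bm x,\bm y)}{2Q(\bm x)}$, which is exactly the hypothesis $v\bigl(\tfrac12 B(\bm x,\bm y)\bigr)\ge v(Q(\bm x))$. One small quibble with your closing remark: since the $\tfrac12$ is built into both the definition of $v(L;\bm x)$ and $Q(\bm x)=\tfrac12 B(\bm x,\bm x)$, the coefficient is $\tfrac{\frac12 B(\bm x,\bm y)}{Q(\bm x)}$ and its integrality follows from the hypothesis without invoking $v(2)=0$; the oddness of $p$ is assumed throughout the section but is not actually the point of failure you suggest it is here.
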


Since $p\ne2$, the following is well-known (for example, see \cite[Subsection 8.3]{Cassels}).

\begin{prop}\label{prop:orsplitting}
Let $p$ be an odd prime and $L$ be a $\Z_p$-lattice of rank $\ell$. Then there exists a basis of orthogonal vectors $\bm{\gamma_1},\dots,\bm{\gamma_{\ell}}\in L$ of (not necessarily distinct) indices $r_1,\dots,r_{\ell}$ such that 
\[
	L=\Z_p \bm{\gamma_{1}}\perp \cdots\perp\Z_p\bm{\gamma_{\ell}}.
\]
\end{prop}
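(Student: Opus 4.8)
The plan is to argue by induction on the rank $\ell$, peeling off one orthogonal direct summand at each stage. The base case $\ell=1$ is immediate, since any generator of $L$ serves as $\bm{\gamma_1}$. For the inductive step it suffices, by \Cref{lem:splitting1}, to produce a single vector $\bm{\gamma_1}\in L$ with $v(L;\bm{\gamma_1})=v(Q(\bm{\gamma_1}))$: such a vector is orthogonal, so $\Z_p\bm{\gamma_1}$ splits $L$ as $L=\Z_p\bm{\gamma_1}\perp M$ with $M:=(\Z_p\bm{\gamma_1})^\perp$ a $\Z_p$-lattice of rank $\ell-1$, and one then applies the inductive hypothesis to $M$. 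Note that an orthogonal vector is automatically primitive: if $\bm{\gamma_1}=p\bm{x}'$, then \Cref{prop:ordervr}~(2) gives $v(L;\bm{\gamma_1})=1+v(L;\bm{x}')$ while $v(Q(\bm{\gamma_1}))=2+v(Q(\bm{x}'))$, which together with the general inequality $v(L;\bm{x}')\le v(Q(\bm{x}'))$ is impossible; hence the vectors produced really form a basis.

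The heart of the matter, and the only place where $p\neq 2$ enters, is the construction of such an orthogonal vector. Set
\[
	r := \min_{\bm x,\bm y\in L} v\!\left(\tfrac12 B(\bm x,\bm y)\right) = v_p(s(L)),
\]
which is finite because $V$ is nondegenerate, and choose $\bm{x_0},\bm{y_0}\in L$ realizing this minimum. Since $r$ is the global minimum, every $\bm z\in L$ satisfies $v(L;\bm z)\ge r$; thus it suffices to exhibit $\bm z$ with $v(Q(\bm z))=r$, as this, combined with $v(L;\bm z)\le v(Q(\bm z))$, forces $v(L;\bm z)=v(Q(\bm z))=r$. If already $v(Q(\bm{x_0}))=r$ or $v(Q(\bm{y_0}))=r$ we are done. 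Otherwise $v(Q(\bm{x_0}))>r$ and $v(Q(\bm{y_0}))>r$, and I would take $\bm z:=\bm{x_0}+\bm{y_0}$. Expanding $Q(\bm z)=Q(\bm{x_0})+Q(\bm{y_0})+B(\bm{x_0},\bm{y_0})$ and using that $v(2)=0$ for odd $p$ gives $v(B(\bm{x_0},\bm{y_0}))=v(\tfrac12 B(\bm{x_0},\bm{y_0}))=r$, which is strictly smaller than $v(Q(\bm{x_0})+Q(\bm{y_0}))$; hence $v(Q(\bm z))=r$, as required.

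With the orthogonal vector in hand, the induction closes: writing $M=\Z_p\bm{\gamma_2}\perp\cdots\perp\Z_p\bm{\gamma_\ell}$ from the inductive hypothesis yields $L=\Z_p\bm{\gamma_1}\perp\cdots\perp\Z_p\bm{\gamma_\ell}$, and the orthogonality of $M$ to $\bm{\gamma_1}$ shows the indices $r_i=v(L;\bm{\gamma_i})$ agree whether computed in $M$ or in $L$. The main obstacle is precisely the case distinction in the middle paragraph: when no basis-type generator has self-pairing of minimal valuation, one must manufacture one, and this is exactly where $p\neq 2$ is essential — over $\Z_2$ the term $B(\bm{x_0},\bm{y_0})=2\cdot\tfrac12 B(\bm{x_0},\bm{y_0})$ may have strictly larger valuation than $\tfrac12 B(\bm{x_0},\bm{y_0})$, so the argument breaks down, and indeed binary forms such as $\pmat{0&1\\1&0}$ over $\Z_2$ admit no orthogonal basis.
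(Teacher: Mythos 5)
Your proof is correct. The paper itself gives no argument for this proposition, merely citing Cassels (Subsection 8.3), and your induction --- extracting an orthogonal vector $\bm z$ with $v(L;\bm z)=v(Q(\bm z))=v_p(s(L))$ by taking $\bm{x_0}+\bm{y_0}$ when neither $Q(\bm{x_0})$ nor $Q(\bm{y_0})$ attains the minimal valuation, then splitting off $\Z_p\bm z$ via \Cref{lem:splitting1} --- is precisely the standard argument from that reference, including the correct identification of where $p\neq 2$ is used.
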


The $r_j$ appearing in \Cref{prop:orsplitting} are also the only possible critical indices of $L$, as proven by James and Rosenzweig in \cite[Proposition 4]{james_associated_1968}.

\begin{lem}\label{lem:orsplitting}
	Let $p$ be an odd prime and $L$ be a $\Z_p$-lattice of rank $\ell$. Then the only possible critical indices of $\bm x\in L$ are the $r_j$ appearing in \Cref{prop:orsplitting}.
\end{lem}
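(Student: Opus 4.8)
The plan is to use the orthogonal splitting of \Cref{prop:orsplitting} to convert every order $v(\Lr{r};\bm x)$ into an explicit formula in coordinates, and then to read the critical indices off that formula by an elementary monotonicity analysis. We may assume $\bm x\ne\bm 0$, as otherwise there are no critical indices and the claim is vacuous.

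First I would fix the orthogonal basis $\bm{\gamma_1},\dots,\bm{\gamma_\ell}$ of indices $r_1,\dots,r_\ell$ from \Cref{prop:orsplitting}. Because the $\bm{\gamma_k}$ are pairwise orthogonal, $\tfrac12 B(\bm{\gamma_k},\bm y)$ sees only the $\bm{\gamma_k}$-coordinate of $\bm y$, so for $\bm y=\sum_k y_k\bm{\gamma_k}$ one gets $v(L;\bm y)=\min_k(v(y_k)+r_k)$ (in particular $r_k=v(Q(\bm{\gamma_k}))$). This yields the coordinate description
\[
	\Lr{r} = \Z_p\,p^{\max(0,r-r_1)}\bm{\gamma_1}\perp\cdots\perp\Z_p\,p^{\max(0,r-r_\ell)}\bm{\gamma_\ell},
\]
again an orthogonal sum. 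Substituting this back into the definition of the order, for $\bm x=\sum_j x_j\bm{\gamma_j}$ one obtains the key formula
\[
	v\left(\Lr{r};\bm x\right) = \min_{j}\bigl(v(x_j)+\max(r_j,r)\bigr).
\]

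The heart of the argument is then a monotonicity analysis of $g(r):=v(\Lr{r};\bm x)=\min_{j}g_j(r)$, where $g_j(r):=v(x_j)+\max(r_j,r)$ ranges over those $j$ with $x_j\ne0$. Each $g_j$ is non-decreasing with $g_j(r+1)-g_j(r)=1$ if $r\ge r_j$ and $=0$ otherwise. Writing $S(r):=\{j:x_j\ne0,\ g_j(r)=g(r)\}$ for the set of minimizers, I would establish the increment criterion that $g(r)<g(r+1)$ if and only if $r\ge r_j$ for every $j\in S(r)$; this follows by separating the minimizing indices, whose values increase by exactly $1$ precisely when $r\ge r_j$, from the non-minimizing ones, whose values already exceed $g(r)$ by at least $1$ since all orders are integers. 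Applying the criterion to $r=0$, the defining condition $g(0)<g(1)$ forces $r_j\le0$, hence $r_j=0$, for every minimizer $j\in S(0)$, so $0$ is one of the $r_j$. For $r>0$, the condition $g(r)<g(r+1)$ gives $r\ge r_j$ for all $j\in S(r)$, while the condition $g(r-1)=g(r)$ is exactly the failure of strict increase at the previous step and hence produces, via the criterion, a witness $j_1\in S(r-1)$ with $r_{j_1}\ge r$. Since $r_{j_1}\ge r>r-1$, one checks $g_{j_1}(r)=v(x_{j_1})+r_{j_1}=g_{j_1}(r-1)=g(r)$, so $j_1\in S(r)$; combining $r\ge r_{j_1}$ with $r_{j_1}\ge r$ gives $r=r_{j_1}$, and thus every critical index coincides with some $r_j$.

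The main obstacle I anticipate is purely bookkeeping: proving the increment criterion cleanly and correctly tracking the interplay between $S(r-1)$ and $S(r)$, since the whole conclusion hinges on promoting the witness $j_1\in S(r-1)$ to a member of $S(r)$. There is no conceptual difficulty once the coordinate formula for $v(\Lr{r};\bm x)$ is in hand; the orthogonal splitting of \Cref{prop:orsplitting} does the real work, reducing the lemma to an elementary analysis of a minimum of non-decreasing piecewise-linear functions each having a single slope change at its own $r_j$.
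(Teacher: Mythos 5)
Your proof is correct, and it is genuinely different from what the paper does: the paper gives no proof of this lemma at all, simply citing Proposition 4 of James--Rosenzweig, whereas you supply a complete, self-contained, elementary argument. Each step checks out: since the $\bm{\gamma_j}$ are pairwise orthogonal with $v(Q(\bm{\gamma_j}))=r_j$, one indeed gets $v(L;\bm y)=\min_j(v(y_j)+r_j)$, hence $\Lr{r}=\perp_j \Z_p\,p^{\max(0,r-r_j)}\bm{\gamma_j}$ and the key formula $v(\Lr{r};\bm x)=\min_j\left(v(x_j)+\max(r_j,r)\right)$; your increment criterion for $g(r)=\min_j g_j(r)$ is valid because all the finite $g_j(r)$ are integers, so a non-minimizer exceeds the minimum by at least $1$ and cannot overtake it after a unit step; and the promotion of the witness $j_1\in S(r-1)$ into $S(r)$ (using $r_{j_1}\ge r$ to see $g_{j_1}(r)=g_{j_1}(r-1)$) correctly squeezes $r=r_{j_1}$. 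The only implicit hypotheses you rely on --- that $p$ is odd (so the diagonalization of \Cref{prop:orsplitting} exists) and that $s(L)\subseteq\Z_p$ (so $r_j\ge0$, needed in the $r=0$ case) --- are exactly the standing assumptions of the paper. What your route buys is independence from the external reference and an explicit description of $\Lr{r}$ and of $v(\Lr{r};\bm x)$ in coordinates, which also recovers the content of \Cref{re:orsimple} directly; what it gives up is the greater generality of the cited source, which treats lattices over valuation rings without passing through a diagonal basis.
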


James \cite[Proposition 5]{james_integral_1965} used Proposition \ref{prop:orsplitting} to obtain a useful decomposition of a vector based on its critical indices. Namely, he represented a vector $\bm{x}\in L$ as a linear combination of pairwise orthogonal, primitive, and simple vectors whose indices coincide with the critical indices of $\bm{x}$.

\begin{prop}\label{prop:orsimple}
	Let $\bm x\in L$ be a primitive vector whose critical indices are $r_1<\dots<r_s$ and write $v(\Lr{r_j};\bm x)=r_j+b_j$. Then $\bm x$ can be expressed as linear combination of pairwise orthogonal, primitive, and simple vectors $\bm{\l_j}\in L$ of index $r_j$, i.e.,
	\begin{align*}
		\bm{x}=\sum_{j=1}^{s}p^{b_j}\bm{\lambda_j}.
	\end{align*}
	Moreover, $b_1>\dots>b_s=0$ and $r_1+b_1<\dots<r_s+b_s$.
\end{prop}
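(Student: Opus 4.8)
The plan is to reduce everything to explicit coordinates via Proposition \ref{prop:orsplitting} and then read off the decomposition from a single piecewise-linear function. First I would take the orthogonal basis of Proposition \ref{prop:orsplitting} and group its vectors by their common index: writing the distinct indices as $\tau_1<\dots<\tau_m$ and letting $M_k$ be the span of the basis vectors of index $\tau_k$, one obtains an orthogonal splitting $L=M_1\perp\dots\perp M_m$ in which every basis vector of $M_k$ has index $\tau_k$. Decomposing $\bm{x}=\sum_k\bm{x}^{(k)}$ with $\bm{x}^{(k)}\in M_k$ and writing $\bm{x}^{(k)}=p^{\beta_k}\bm{\mu_k}$ with $\bm{\mu_k}\in M_k$ primitive (so $\bm{\mu_k}$ is simple of index $\tau_k$ and $\beta_k$ is the least valuation among the $M_k$-coordinates of $\bm{x}$), primitivity of $\bm{x}$ is equivalent to $\min_k\beta_k=0$.

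The key computation is the explicit formula
\[
	v\left(\Lr{r};\bm{x}\right)=\min_k\left(\beta_k+\max(\tau_k,r)\right),
\]
which follows because $\Lr{r}$ splits as $\bigoplus_k p^{\max(0,r-\tau_k)}M_k$ and each $M_k$ contributes $\beta_k+\max(\tau_k,r)$ to the minimum. Thus $f(r):=v(\Lr{r};\bm{x})$ is the lower envelope of the convex functions $G_k(r)=\beta_k+\max(\tau_k,r)$, each flat at height $V_k:=\beta_k+\tau_k$ for $r\le\tau_k$ and of slope $1$ for $r\ge\tau_k$. Since $f$ is nondecreasing with slopes in $\{0,1\}$, its critical indices are exactly the points where a flat segment is followed by a rising one, and I would show these are precisely the indices $\tau_k$ of the \emph{non-dominated} summands, where $M_k$ is dominated if some $M_{k'}$ has both $\beta_{k'}\le\beta_k$ and $V_{k'}\le V_k$. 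Labelling the non-dominated summands $k_1,\dots,k_s$ (with $\tau_{k_1}<\dots<\tau_{k_s}$) yields $r_j=\tau_{k_j}$, $b_j=\beta_{k_j}$, and $V_{k_j}=r_j+b_j$. The orderings then drop out of the shape of $f$: $b_s=0$ from primitivity; the strict increase of $r_j+b_j=f(r_j)$ from $f(r_j)<f(r_j+1)\le f(r_{j+1})$; and the strict decrease of $b_j$ from the fact that criticality of $r_{j+1}$ forces a genuine flat step inside $[r_j,r_{j+1}]$, so $f(r_{j+1})-f(r_j)<r_{j+1}-r_j$.

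To build the vectors I would assign each summand $M_k$ to a critical index: the non-dominated $M_{k_j}$ is assigned to $j$, and each dominated $M_k$ is assigned to a non-dominated summand dominating it (say the one of largest index), which exists because domination is a preorder whose minimal elements are exactly the non-dominated summands. Setting $\bm{\lambda_j}:=\sum_{k\to j}p^{\beta_k-b_j}\bm{\mu_k}$, the identity $\sum_j p^{b_j}\bm{\lambda_j}=\bm{x}$ and the pairwise orthogonality are immediate, since the summands are grouped disjointly into orthogonal $M_k$. The remaining checks — that $\bm{\lambda_j}\in L$, is primitive, and is simple of index $r_j$ — follow by feeding the validity conditions $b_j\le\beta_k$ and $V_k\ge r_j+b_j$ (satisfied by every $k$ assigned to $j$, with equality for $k=k_j$) back into the envelope formula, which gives $v(L;\bm{\lambda_j})=v(\Lr{r_j};\bm{\lambda_j})=r_j$.

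I expect the main obstacle to be the bookkeeping in this last step: pinning down the correct assignment of the redundant (dominated) components so that each $\bm{\lambda_j}$ is genuinely simple of index $r_j$ rather than of some smaller index. A naive assignment by nearest critical index fails, since a component of small index but large valuation can legitimately belong to a much larger critical index; the assignment must instead be governed by the pair $(\beta_k,V_k)$ through the domination order, and the simplicity of each $\bm{\lambda_j}$ must be re-verified directly from the formula for $v(\Lr{r_j};\bm{\lambda_j})$.
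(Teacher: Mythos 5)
Your argument is correct, but it is a genuinely different route from the paper's: the paper proves Proposition \ref{prop:orsimple} in one line by observing that the statement is \cite[Proposition 5]{james_integral_1965} verbatim except for the primitivity hypothesis, and then only remarks that primitivity forces $b_s=0$; all the real content is outsourced to James. You instead give a self-contained, fully explicit proof from the diagonalization of Proposition \ref{prop:orsplitting}: the identity $\Lr{r}=\perp_k p^{\max(0,r-\tau_k)}M_k$ and the resulting envelope formula $v(\Lr{r};\bm{x})=\min_k\bigl(\beta_k+\max(\tau_k,r)\bigr)$ are both correct, and from them the identification of critical indices with the non-dominated components, the orderings $b_1>\dots>b_s=0$ and $r_1+b_1<\dots<r_s+b_s$, and the simplicity of each $\bm{\lambda_j}$ (via $v(\Lr{r_j};\bm{\lambda_j})\le\beta_{k_j}-b_j+\max(\tau_{k_j},r_j)=r_j\le v(L;\bm{\lambda_j})$) all check out. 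What your approach buys is transparency: it reproves Lemma \ref{lem:orsplitting} as a byproduct, makes the non-uniqueness of the decomposition visible (any assignment of a dominated component to a dominating non-dominated one works), and isolates exactly where primitivity enters ($\min_k\beta_k=0$ forces some non-dominated component to have $\beta=0$, which by the monotonicity of the $b_j$ must be the last one). The cost is length and bookkeeping relative to the paper's citation. Two harmless points you should make explicit in a write-up: components with $\bm{x}^{(k)}=\bm{0}$ must be discarded (or given $\beta_k=\infty$, in which case they are automatically dominated and contribute nothing), and ``dominated'' must mean strictly dominated by a \emph{different} summand, which is a genuine strict partial order here because the $\tau_k$ are distinct.
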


\begin{proof}
	The statement of Proposition \ref{prop:orsimple} coincides with the statement in \cite[Proposition 5]{james_integral_1965}, except that James did not assume that $\bm x$ is primitive and does not claim that $b_s=0$. However, it is not hard to show that if $\bm x$ is primitive, then $b_s=0$.\qedhere
\end{proof}

\begin{re}\label{re:orsimple}
	Decomposing $\bm x\in L$ as in \Cref{prop:orsimple}, we have
	\[
		v(L;\bm{x}) = \min_{1\le j\le s} v\left(L;p^{b_j}\bm{\lambda_j}\right) = \min_{1\le j\le s}\left(b_j+r_j\right) = b_1+r_1 = v\left(\Lr{r_1};\bm{x}\right).
	\]
	In particular, if $\bm x$ is simple of index $r$, then $r=r_1$ and $v(L;\bm x)=v(\Lr{r_1};\bm x)=r_1$.
\end{re}

The following lemma was given by O'Meara, James, and Rosenzweig \cite[Proposition 8]{james_associated_1968}.

\begin{lem}\label{lem:asnondyadi}
	Let $\bm x,\bm y\in L$ be simple vectors in 
a $\Z_p$-lattice 
\rm
$L$ and suppose that $p\ne2$. Then $\bm x\sim_L\bm y$ if and only if $Q(\bm x)=Q(\bm y)$ and $v(L;\bm x)=v(L;\bm y)=r$.
\end{lem}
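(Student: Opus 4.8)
\textbf{The forward direction} is immediate. If $\sigma\in O(L)$ with $\sigma(\bm x)=\bm y$, then $Q(\bm y)=Q(\sigma(\bm x))=Q(\bm x)$ since $\sigma$ preserves $Q$, and because $\sigma$ is a bijection of $L$ preserving $B$,
\[
	v(L;\bm y)=\min_{\bm w\in L}v\left(\tfrac12 B(\sigma(\bm x),\bm w)\right)=\min_{\bm w\in L}v\left(\tfrac12 B(\bm x,\sigma^{-1}(\bm w))\right)=v(L;\bm x).
\]
So all the content lies in the converse.

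\textbf{The converse.} The engine is reflection in an anisotropic vector $\bm d\in L$, namely $\tau_{\bm d}(\bm v):=\bm v-\frac{B(\bm v,\bm d)}{Q(\bm d)}\bm d$, an isometry of $V$ fixing $\bm d^\perp$ and sending $\bm d\mapsto-\bm d$. The first step is to record exactly when $\tau_{\bm d}$ preserves $L$. Reducing to $\bm d$ primitive (writing $\bm d=p^{k}\bm d'$ with $\bm d'$ primitive does not change $\tau_{\bm d}=\tau_{\bm d'}$), one checks that $\tau_{\bm d}(L)\subseteq L$ if and only if $\frac{B(\bm v,\bm d)}{Q(\bm d)}\in\Z_p$ for every $\bm v\in L$, which (using $p\ne2$, so that $2\in\Z_p^\times$) is equivalent to $v(L;\bm d)\ge v(Q(\bm d))$. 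Since the reverse inequality always holds by taking $\bm w=\bm d$ in the definition of $v(L;\bm d)$, the reflection $\tau_{\bm d}$ lies in $O(L)$ \emph{precisely when $\bm d$ is orthogonal} in the sense defined above; this is also consistent with Lemma~\ref{lem:splitting1}, and it is one of the two places where $p\ne2$ enters decisively (the other being that for nondyadic $L$ the group $O(L)$ is generated by such reflections).

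Next I would exploit $Q(\bm x)=Q(\bm y)$ via the parallelogram identities. A direct computation gives $\frac{B(\bm x,\bm x-\bm y)}{Q(\bm x-\bm y)}=1=\frac{B(\bm x,\bm x+\bm y)}{Q(\bm x+\bm y)}$, whence $\tau_{\bm x-\bm y}(\bm x)=\bm y$ and $\tau_{\bm x+\bm y}(\bm x)=-\bm y$ whenever the relevant axis is anisotropic, while $\tau_{\bm y}(-\bm y)=\bm y$ when $\bm y$ is anisotropic. Combined with the criterion above, $\bm x\sim_L\bm y$ as soon as the primitive vector underlying $\bm x-\bm y$ is anisotropic and orthogonal, or, failing that, as soon as the primitive vectors underlying both $\bm x+\bm y$ and $\bm y$ are anisotropic and orthogonal. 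More generally, it suffices to connect $\bm x$ to $\bm y$ by a finite chain $\bm x=\bm z_0,\dots,\bm z_n=\bm y$ of simple vectors of index $r$ with common value $Q(\bm z_i)=Q(\bm x)$, each consecutive step being realized by a reflection with orthogonal axis and therefore lying in $O(L)$.

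\textbf{The main step, and the crux,} is producing such a chain. Since $\bm x$ and $\bm y$ are simple of index $r$ they are primitive and have order exactly $r$ (this follows from Proposition~\ref{prop:orsimple} and Remark~\ref{re:orsimple}), so via the orthogonal splitting $L=\Z_p\bm{\gamma_1}\perp\cdots\perp\Z_p\bm{\gamma_\ell}$ of Proposition~\ref{prop:orsplitting} both have a primitive component in the index-$r$ block. When the naive reflections above fail—this happens exactly in the \emph{degenerate/isotropic situation}, where $\bm x\pm\bm y$ may simultaneously be isotropic or non-orthogonal (for instance when $\bm x$ and $\bm y$ lie on the same isotropic line inside a hyperbolic block)—one inserts an intermediate simple vector $\bm z$ of index $r$ with $Q(\bm z)=Q(\bm x)$ chosen ``transverse'' to both $\bm x$ and $\bm y$, so that $\bm x-\bm z$ and $\bm z-\bm y$ are each anisotropic and orthogonal. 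The existence of such a $\bm z$ is where $p\ne2$ is again essential: the residue field $\F_p$ carries enough squares and nonsquares and Hensel's lemma lifts them, so one can prescribe the anisotropic lengths $Q(\bm x-\bm z)$ and $Q(\bm z-\bm y)$ within the available orthogonal directions of index $r$. I expect this intermediate-vector construction—the bookkeeping needed to guarantee a transverse $\bm z$ in every configuration while keeping all axes orthogonal, so that every reflection stays in $O(L)$—to be the main obstacle. It is precisely the nondyadic case analysis of James and Rosenzweig, and the simple-vector machinery of Proposition~\ref{prop:simpleequiv}, Proposition~\ref{prop:orsimple}, and Remark~\ref{re:orsimple} is exactly what reduces the problem to the single index-$r$ block.
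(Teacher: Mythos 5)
The paper does not prove this lemma at all: it is quoted directly from James and Rosenzweig \cite[Proposition 8]{james_associated_1968}, so there is no in-paper argument to compare yours against. Judged on its own terms, your proposal correctly identifies the standard strategy but is not a complete proof.

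Your forward direction is fine, and your reductions are sound: the computation $B(\bm x,\bm x-\bm y)=2Q(\bm x)-B(\bm x,\bm y)=Q(\bm x-\bm y)$ (using $Q(\bm x)=Q(\bm y)$) does give $\tau_{\bm x-\bm y}(\bm x)=\bm y$ whenever $\bm x-\bm y$ is anisotropic, and your criterion that $\tau_{\bm d}\in O(L)$ for primitive $\bm d$ exactly when $v(L;\bm d)=v(Q(\bm d))$ is correct for $p\ne2$. The difficulty is that everything you actually verify disposes only of the easy configurations; the entire content of the lemma is concentrated in the step you yourself label the crux, namely producing in every degenerate configuration an intermediate simple vector $\bm z$ of index $r$ with $Q(\bm z)=Q(\bm x)$ such that the primitive vectors underlying $\bm x-\bm z$ and $\bm z-\bm y$ are anisotropic and orthogonal. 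You assert its existence by appeal to squares in the residue field and Hensel's lemma, but give no construction and no case analysis, and these degenerate cases genuinely arise here: although $L$ comes from a positive-definite $\Z$-lattice, the localization $L_p$ is typically isotropic for $\ell\ge3$. Note also that the bookkeeping is not confined to ``the single index-$r$ block'': $\bm x\pm\bm y$ and $\bm x-\bm z$ need not lie in, nor project primitively onto, the index-$r$ components of the splitting from Proposition \ref{prop:orsplitting}, and whether their underlying primitive vectors are orthogonal depends on how all the blocks interact. So the proposal correctly locates where the difficulty lives and which tools ($p\neq 2$, reflections, Propositions \ref{prop:simpleequiv}, \ref{prop:orsplitting}, \ref{prop:orsimple}) are relevant, but the decisive existence argument --- which is precisely what \cite[Proposition 8]{james_associated_1968} supplies --- is left unproven.
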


The next lemma considers whether vectors are associated if $v(Q(\bm x))=0$.

\begin{lem}\label{lem:asnux20}
	For $p\ne2$, let $L$ be a $\Z_p$-lattice and suppose that $\bm x\in L$ satisfies $v(Q(\bm x))=0$. Then we have $\bm x\sim_L\bm y$ for any $\bm y\in L$ with $Q(\bm y)=Q(\bm x)$.
\end{lem}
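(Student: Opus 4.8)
The plan is to reduce the assertion to \Cref{lem:asnondyadi} by proving that both $\bm x$ and $\bm y$ are simple vectors of index $0$. The engine is the remark following the definition of \emph{orthogonal}: a vector with $v(L;\bm x)=v(Q(\bm x))=r$ is automatically simple of index $r$. So the real task is to show that the hypothesis $v(Q(\bm x))=0$ forces $v(L;\bm x)=0$, and likewise for $\bm y$.

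First I would establish $v(L;\bm x)=0$. Since we assume $s(L)=\Z_p$ throughout, the lattice is integral, so $\frac12 B(\bm x,\bm w)\in\Z_p$ for every $\bm w\in L$ and hence $v(L;\bm x)\ge 0$. Conversely, choosing $\bm w=\bm x$ in the definition $v(L;\bm x)=\min_{\bm w\in L}v(\frac12 B(\bm x,\bm w))$ gives $v(L;\bm x)\le v(\frac12 B(\bm x,\bm x))=v(Q(\bm x))=0$. Combining the two inequalities yields $v(L;\bm x)=0=v(Q(\bm x))$, so $\bm x$ is orthogonal and therefore simple of index $0$. Applying the same argument to $\bm y$, whose norm satisfies $v(Q(\bm y))=v(Q(\bm x))=0$ by assumption, shows that $\bm y$ is simple of index $0$ as well.

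It remains to invoke \Cref{lem:asnondyadi}. The vectors $\bm x$ and $\bm y$ are both simple, satisfy $Q(\bm x)=Q(\bm y)$ by hypothesis, and have $v(L;\bm x)=v(L;\bm y)=0$; the lemma (with $r=0$) then gives $\bm x\sim_L\bm y$ directly. There is no substantial obstacle in this argument: the only point requiring attention is the lower bound $v(L;\bm x)\ge 0$, which is exactly where integrality of $L$ enters, and everything else is a clean application of the cited results.
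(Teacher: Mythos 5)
Your proof is correct and follows essentially the same route as the paper's: both deduce $v(L;\bm x)\le v(Q(\bm x))=0$ (you additionally spell out the lower bound $v(L;\bm x)\ge 0$ coming from integrality), conclude that $\bm x$ and $\bm y$ are simple of index zero, and finish with \Cref{lem:asnondyadi}. No issues.
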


\begin{proof}
	Since $v(Q(\bm x))=0$, we have
	\[
		v(L;\bm{x})\le v(Q(\bm{x}))=0 
	\]
	and so $\bm x$ is simple of index zero.
 If $\bm y\in L$ satisfies $Q(\bm y)=Q(\bm x)$, then we directly obtain
	\[
		v(Q(\bm{y}))=v(Q(\bm{x}))=0.
	\]
	Hence $\bm y$ is also simple of index zero. The claim now follows by \Cref{lem:asnondyadi}.
\end{proof}

For unimodular lattices, we may use the following lemma.

\begin{lem}\label{lem:asunimodular}
	Let $L$ be a unimodular $\Z_p$-lattice with $p\ne2$ and $\bm x\in L$ be primitive with $v(Q(\bm x))>0$. Then we have $\bm x\sim_L\bm y$ for any primitive $\bm y\in L$ with $Q(\bm y)=Q(\bm x)$.
\end{lem}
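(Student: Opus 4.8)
The plan is to reduce the claim to the already-established criterion for associates of simple vectors in \Cref{lem:asnondyadi}. Since $Q(\bm y)=Q(\bm x)$ holds by hypothesis, it suffices to show that both $\bm x$ and $\bm y$ are simple of the \emph{same} index; I would show that every primitive vector of a unimodular $\Z_p$-lattice is simple of index $0$, so that the common index is $r=0$ and \Cref{lem:asnondyadi} immediately yields $\bm x\sim_L\bm y$. (The hypothesis $v(Q(\bm x))>0$ plays no essential role in the argument; it merely separates this lemma from the complementary case $v(Q(\bm x))=0$ treated in \Cref{lem:asnux20}, where primitivity of $\bm y$ need not be assumed.)

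First I would apply \Cref{prop:orsplitting} to fix an orthogonal splitting $L=\Z_p\bm{\gamma_1}\perp\cdots\perp\Z_p\bm{\gamma_\ell}$ with indices $r_1,\dots,r_\ell$. In this basis the Gram matrix is diagonal, so $d_L=\prod_{j=1}^\ell Q(\bm{\gamma_j})$. Because $L$ is unimodular, $d_L\in\Z_p^\times$, and since each factor $Q(\bm{\gamma_j})\in\Z_p$ lies in the ring of integers, the product being a unit forces every $Q(\bm{\gamma_j})\in\Z_p^\times$. Hence $v(Q(\bm{\gamma_j}))=0$, and as each $\bm{\gamma_j}$ is orthogonal this gives $r_j=v(L;\bm{\gamma_j})=v(Q(\bm{\gamma_j}))=0$ for all $j$.

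Next, for a primitive vector $\bm z\in L$ (applied in turn to $\bm z=\bm x$ and $\bm z=\bm y$), write $\bm z=\sum_{j=1}^\ell c_j\bm{\gamma_j}$ with $c_j\in\Z_p$. Primitivity means $\bm z\notin pL$, so some coefficient $c_{j_0}$ is a unit. Orthogonality of the $\bm{\gamma_j}$ gives $\tfrac12 B(\bm z,\bm{\gamma_{j_0}})=c_{j_0}Q(\bm{\gamma_{j_0}})\in\Z_p^\times$; since $\tfrac12 B(\bm z,\bm y)\in\Z_p$ for every $\bm y\in L$ by integrality, this attains the minimum and so $v(L;\bm z)=0$. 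Because $L=\Lr{0}$, we have $v(\Lr{0};\bm z)=v(L;\bm z)=0$, i.e. $\bm z$ is simple of index $0$. Applying this to both $\bm x$ and $\bm y$ and invoking \Cref{lem:asnondyadi} with $r=0$ then completes the proof.

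The argument involves no serious obstacle: the only substantive point is that unimodularity forces every orthogonal-basis index — and hence the order $v(L;\cdot)$ of every primitive vector — to vanish, after which the conclusion is a direct consequence of \Cref{lem:asnondyadi}. The subtlety worth flagging is the role of the primitivity hypothesis on $\bm y$: without it $\bm y$ could lie in $pL$, its order $v(L;\bm y)$ would be positive, and its index would no longer match that of $\bm x$, so the criterion could not be applied.
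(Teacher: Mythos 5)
Your proof is correct, and its overall skeleton matches the paper's: both arguments reduce to showing that every primitive vector of a unimodular $\Z_p$-lattice is simple of index $0$, via the orthogonal splitting of \Cref{prop:orsplitting} and the observation that unimodularity forces each $Q(\bm{\gamma_j})$ to be a unit, after which \Cref{lem:asnondyadi} finishes the job. The one genuine difference is how you get from ``all $r_j=0$'' to ``every primitive vector is simple of index $0$.'' The paper routes this through the critical-index machinery: \Cref{lem:orsplitting} (critical indices must lie among the $r_j$) together with \Cref{prop:orsimple} (decomposition into simple vectors with indices equal to the critical indices). You instead verify the definition directly: writing $\bm z=\sum_j c_j\bm{\gamma_j}$ with some $c_{j_0}\in\Z_p^\times$, you exhibit the witness $\tfrac12B(\bm z,\bm{\gamma_{j_0}})=c_{j_0}Q(\bm{\gamma_{j_0}})\in\Z_p^\times$, so $v(L;\bm z)=v(\Lr{0};\bm z)=0$. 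This is more elementary and self-contained --- it needs neither \Cref{lem:orsplitting} nor \Cref{prop:orsimple} --- at the cost of being specific to the unimodular case, whereas the paper's critical-index route is the one that generalizes when not all $r_j$ vanish. Your closing remarks (that $v(Q(\bm x))>0$ is not actually used, and that primitivity of $\bm y$ is the essential hypothesis) are both accurate.
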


\begin{proof}
	We claim that every primitive vector in $L$ is simple of index $0$. For this, we need to show that a primitive vector cannot have any critical indices $r\neq 0$.
	To see this recall that by \Cref{lem:orsplitting} the critical indices of a vector must be one of the choices of $r$ appearing in \Cref{prop:orsplitting}, so it remains to show that only $r=0$ may appear in the orthogonal splitting in \Cref{prop:orsplitting}. Note that in an orthogonal splitting $L=M_1\perp M_2$
	\begin{equation}\label{eqn:dsplit}
		d_L=d_{M_1}d_{M_2}.
	\end{equation}
	Since $L$ is unimodular, if $M_1$ and $M_2$ are integral, then they must also be unimodular, so $v(d_{M_j})=0$.
	Inductively, each block $\Z_p\bm{\g_j}$ in the orthogonal splitting from \Cref{prop:orsplitting} must also be unimodular. Since $d_{\Z_p\bm{\g_j}}=Q(\bm{\g_j})$ and \eqref{eqn:dsplit} implies that $v(d_{\Z_p\bm{g_j}})=0$, we conclude that $v(Q(\bm{\g_j}))=0$ and thus $r_j=0$ for every $1\le j\le\ell$ in \Cref{prop:orsplitting}. Thus \Cref{lem:orsplitting} implies that only critical index of $\bm z\in L$ is $0$.
	By \Cref{prop:orsimple}, each primitive $\bm z\in L$ is simple of index $0$.
	By assumption, $\bm x$ and $\bm y$ are primitive, so $v(L;\bm x)=v(L;\bm y)=0$, and, since $Q(\bm x)=Q(\bm y)$ by assumption, we conclude that $\bm x\sim_L\bm y$ by \Cref{lem:asnondyadi}.
\end{proof}

\section{The ratio of the masses between lattices and shifted lattices}\label{sec:mass}

Let $L$ be a $\Z$-lattice in a quadratic space $(V,Q)$ over $\Q$. Suppose that $\bm u\in L$ is primitive and $\cond\in\N$. The goal of this section is to obtain a lower bound for the Siegel mass $m^+(L+\frac{\bm u}{\cond})$ (and by \eqref{eqn:MassRelate} also for $m(L+\frac{\bm{u}}{\cond})$). As noted in \cite[Remark 6]{shimura_anexact_1999}, \eqref{eqn:massrelationZilong} yields a formula for $m^+(L+\frac{\bm{u}}{\cond})$ if one computes $m^+(L)$ and finitely many local factors $ [\SO(L_p):\SO(L_p+\frac{\bm{u}}{\cond})]$. For a prime $p\mid\cond$, we write $\cond=p^k\condd$ for some $k\in\N$ and $p\nmid\condd$. Since $\condd$ is invertible in $\Z_p$, we have\footnote{We let $\condd^{-1}$ denote the inverse of $d$ in $\Z_p$ under the natural embedding of $\Q$ into $\Q_p$.} 
\[
	\bm{u_p}:=\condd^{-1} \bm u\in L_p, \quad\tfrac{\bm u}{\cond}=\tfrac{\bm{u_p}}{p^k}.
\]
Hence it suffices to bound $m^+(L)$ and 
\begin{equation}\label{eqn:indexppower}
	\left[\SO(L_p) : \SO\left(L_p+\tfrac{\bm{u}}{\cond}\right)\right]=\left[\SO(L_p) : \SO\left(L_p+\tfrac{\bm{u_p}}{p^k}\right)\right]
\end{equation}
from below for every $p\mid\cond$ in order to obtain a lower bound for $m^+(L+\frac{\bm u}{\cond})$.
The primary property that we use to obtain a lower bound for \eqref{eqn:indexppower} is the fact that it is primitive, so for ease of notation we simply write $\bm u$ instead of $\bm{u_p}$ in our calculations below. To estimate \eqref{eqn:indexppower}, we bound
\[
	\left[O(L_p) : O\left(L_p+\tfrac{\bm{u}}{p^k}\right)\right]
\]
and then use \eqref{eqn:OSOinequality}. For each element $\ol\s \in O(L_p)/O(L_p+\frac{\bm u}{p^k})$, we obtain an element $\ol{\s(\bm u)}\in L_p/p^kL_p$, and one easily checks that these are distinct for different choices of $\ol\s$. 
We therefore conclude that
\begin{equation}\label{eq:localfactorexpression}
	\left[O(L_p) : O\left(L_p+\tfrac{\bm{u}}{p^k}\right)\right]= \#\left\{\ol{\s(\bm{u})}\in L_p/p^kL_p : \s\in O(L_p)\right\}.
\end{equation}	 
Let $\bm u$ be a primitive vector of $L$ with $Q(\bm u)=m\in\N_0$. For $k,r\in\N_0$, we define 
\begin{align}\label{eqn:Rupk}
	R\left(Q;\bm u,p^k\right) &:= \left\{\bm x\in L_p/p^kL_p : \bm x\notin pL_p,Q(\bm x)\equiv Q(\bm u)\pmod{p^k}\right\},\\
	\label{eqn:Rurpk}
	R\left(Q;\bm u,r,p^k\right) &:= \left\{\bm x\in R\left(Q;\bm u,p^k\right) : \bm x\equiv\bm u\pmod{p^r}\right\},\\
	r\left(Q;\bm{u},p^k\right) &:= \# R\left(Q;\bm{u},p^k\right),\quad r\left(Q;\bm{u},r,p^k\right) := \# R\left(Q;\bm{u},r,p^k\right).\nonumber
\end{align}
If $ r=0 $, then we have $ R(Q;\bm{u},0,p^{k})=R(Q;\bm{u},p^{k}) $, 
and for $1\le k\le r$ we have
\[
	R\left(Q;\bm{u},r,p^{k}\right)=\left\{\bm{u}+p^kL_p\right\}.
\]
For $ \bm{x},\bm{y}\in L_{p} $, we define the equivalence $ \sim_{r} $ on $ R(Q;\bm{u},p^{k}) $ by $ \bm{x}\sim_{r} \bm{y} $ if $ \bm{x}\equiv \bm{y}\Pmod{p^{r}}$, i.e., $ \bm{x}-\bm{y}\in p^{r}L_{p} $.

\begin{lem}\label{lem:QxQgammau}
Let $L$ be an integral $\Z$-lattice, $p$ an odd prime, $k\in\N$, and $r\in\N_0$. Suppose that $\bm u\in L_p$ satisfies $v_p(Q(\bm u))=j\in\N_0$. For any $\bm x\in R(Q;\bm u,p^{k+j})$ there exists $\g\in\Z_p^\times$ with $\g\equiv1\Pmod{p^k}$ and $Q(\bm x)=Q(\g\bm u)$.
\end{lem}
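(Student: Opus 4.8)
The plan is to exploit the homogeneity $Q(\g\bm{u})=\g^2Q(\bm{u})$, which turns the desired equality $Q(\bm{x})=Q(\g\bm{u})$ into the single scalar equation $\g^2=Q(\bm{x})/Q(\bm{u})$; the task then reduces to extracting a suitable square root of the ratio $a:=Q(\bm{x})/Q(\bm{u})$ in $\Z_p$. Since $v_p(Q(\bm{u}))=j$ is finite, $Q(\bm{u})\ne0$ and this ratio is well defined.

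First I would show that $a\equiv1\pmod{p^k}$. Writing $Q(\bm{u})=p^jw$ with $w\in\Z_p^\times$ and using $\bm{x}\in R(Q;\bm{u},p^{k+j})$, which gives $Q(\bm{x})\equiv Q(\bm{u})\pmod{p^{k+j}}$, I obtain $Q(\bm{x})=p^j(w+p^k\zeta)$ for some $\zeta\in\Z_p$. Dividing, $a=1+p^k(\zeta/w)$ with $\zeta/w\in\Z_p$, so indeed $a\in1+p^k\Z_p$. This is precisely where the exponent $k+j$, rather than $k$, is needed: the factor $p^j$ coming from $v_p(Q(\bm{u}))$ must be absorbed before the congruence modulo $p^k$ survives.

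Second, I would produce a square root $\g$ of $a$ with $\g\equiv1\pmod{p^k}$. I apply Hensel's lemma to $f(t)=t^2-a$ at the approximate root $t_0=1$: here $v_p(f(1))=v_p(1-a)\ge k\ge1$ while $v_p(f'(1))=v_p(2)=0$ because $p$ is odd, so $v_p(f(1))>2v_p(f'(1))$ and Hensel yields a unique root $\g\in\Z_p$ with $v_p(\g-1)=v_p(f(1))-v_p(f'(1))\ge k$. Thus $\g\equiv1\pmod{p^k}$, whence $\g\equiv1\pmod{p}$ and in particular $\g\in\Z_p^\times$. Equivalently, for $p$ odd and $k\ge1$ the squaring map is an automorphism of the multiplicative group $1+p^k\Z_p$, so $a$ has a unique square root lying in $1+p^k\Z_p$. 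Setting $\g$ to this root gives $Q(\g\bm{u})=\g^2Q(\bm{u})=aQ(\bm{u})=Q(\bm{x})$, as required.

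The only place requiring care is the square-root step, and its single essential hypothesis is that $p$ is odd, which guarantees $2\in\Z_p^\times$ and hence that $f'(1)$ is a unit; this is what simultaneously produces the square root and pins it down in the coset $1+p^k\Z_p$. Everything else is bookkeeping with valuations, and the primitivity of $\bm{x}$ plays no role in this particular lemma.
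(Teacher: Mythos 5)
Your proof is correct and follows essentially the same route as the paper: both reduce the problem to extracting a square root of the unit $Q(\bm{x})/Q(\bm{u})\in 1+p^k\Z_p$ via Hensel's lemma, using only that $p$ is odd so that $2\in\Z_p^\times$. The only (cosmetic) difference is that you read off $\g\equiv 1\Pmod{p^k}$ directly from the quantitative form of Hensel's lemma at the approximate root $t_0=1$, whereas the paper first produces $\g=1+\b p$ and then verifies $\b\in p^{k-1}\Z_p$ by a separate valuation computation.
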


\begin{proof}
	Let $\bm x\in L_p$ satisfy $Q(\bm x)\equiv Q(\bm u)\Pmod{p^{k+j}}$. We may choose $\a\in\Z_p$ such that
	\[
		Q(\bm{x})=Q(\bm{u})+p^{k+j}\alpha.
	\]
	Since $v_p(Q(\bm u))=j$, $\frac{Q(\bm u)}{p^j}\in\Z_p^\times$, by Hensel's Lemma, there exists $\b\in\Z_p$ such that
	\begin{align*}
		1+p^{k}\left(\tfrac{Q(\bm{u})}{p^j}\right)^{-1}\alpha = \left(1+\beta p\right)^2.
	\end{align*}
	Therefore 
	\[
		Q(\bm x) = Q(\bm u) + p^{k+j}\a = \left(1+\b p \right)^2Q(\bm u).
	\]
	Setting $\g:=1+\b p$, we thus have $Q(\bm x)=Q(\g\bm u)$. Since
	\begin{equation*}
		Q(\gamma \bm{u}) = Q\left(\bm{u}+\beta p\bm{u}\right) = Q(\bm{u})+B\left(\bm{u},\beta p \bm{u}\right)+\beta^{2} p^{2}Q(\bm{u})
		= Q(\bm{u})+2\beta p Q( \bm{u})+\beta^{2} p^{2}Q(\bm{u}),
	\end{equation*}
	we have
	\[
		\b p^{j+1}\left(2\tfrac{Q(\bm u)}{p^{j}}+\b p\tfrac{Q(\bm u)}{p^j}\right) = 2\b pQ(\bm u) + \b^2p^2Q(\bm{u}) = Q(\g\bm u) - Q(\bm u) = Q(\bm x) - Q(\bm u) \in p^{k+j}\Z_p.
	\]
	Since $ \frac{Q(\bm{u})}{p^{j}}\in \Z_{p}^{\times} $ and $p\neq 2$, $2\frac{Q(\bm u)}{p^j}\in\Z_p^\times$ and it follows that $\b\in p^{k-1}\Z_p$.
	Therefore, we have 
	\[
		\g = 1 + \b p \equiv 1\pmod{p^{k}}. \qedhere
	\]
\end{proof}

We use the following lemma to obtain a lower bound for $[O(L_p):O(L_p+\frac{\bm{u}}{p^k})]$ if $p\nmid \gcd(d_L,Q(\bm{u}))$.

\begin{lem}\label{lem:index_unit_unimodular}
	Let $L$ be an integral $\Z$-lattice, $p$ an odd prime, $k\in\N$, and $j,r\in\N_0$. Let $\bm u\in L_p$ be primitive satisfying $Q(\bm u)\in p^j\Z_p^\times$. If either $j=0$ or ($L_p$ is unimodular and $j\in\N$), then
	\[
		\left[O(L_p) : O\left(L_p+\tfrac{\bm u}{p^k}\right)\right] \ge \left|R\left(Q;\bm u,p^{k+j}\right)\Big/\sim_k\right|.
	\]
	In particular,
	\begin{align*}
		\left[O(L_{p}):O\left(L_{p}+\tfrac{\bm{u}}{p^{k}}\right)\right]\ge r\left(Q;\bm{u},p^{k}\right).
	\end{align*}
\end{lem}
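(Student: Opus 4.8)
The plan is to exploit the description of the index in \eqref{eq:localfactorexpression}, which identifies $[O(L_p):O(L_p+\frac{\bm u}{p^k})]$ with the number of distinct classes $\ol{\sigma(\bm u)}\in L_p/p^kL_p$ arising from $\sigma\in O(L_p)$. First I would produce a map \emph{into} this set of images. Consider the reduction $\pi\colon R(Q;\bm u,p^{k+j})\to L_p/p^kL_p$, $\bm x\mapsto\ol{\bm x}$. Two elements of $R(Q;\bm u,p^{k+j})$ have the same image under $\pi$ exactly when they are $\sim_k$-equivalent, so the image of $\pi$ has precisely $|R(Q;\bm u,p^{k+j})/\!\sim_k|$ elements. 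It therefore suffices to show that every value $\ol{\bm x}$ attained by $\pi$ equals $\ol{\sigma(\bm u)}$ for some $\sigma\in O(L_p)$, as then $|R(Q;\bm u,p^{k+j})/\!\sim_k|\le\#\{\ol{\sigma(\bm u)}:\sigma\in O(L_p)\}$.

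This containment is the crux, and it is where \Cref{lem:QxQgammau} and the association results of \Cref{sec:associates} enter. Given $\bm x\in R(Q;\bm u,p^{k+j})$, \Cref{lem:QxQgammau} (applied with $j=v_p(Q(\bm u))$, legitimate since $Q(\bm u)\in p^j\Z_p^\times$) yields $\g\in\Z_p^\times$ with $\g\equiv1\pmod{p^k}$ and $Q(\bm x)=Q(\g\bm u)$. I then split according to the hypothesis. If $j=0$, then $v_p(Q(\bm x))=v_p(Q(\g\bm u))=0$ and \Cref{lem:asnux20} gives $\bm x\sim_L\g\bm u$. If instead $L_p$ is unimodular and $j\in\N$, then $\g\bm u$ is primitive with $v_p(Q(\g\bm u))=j>0$ and $\bm x$ is primitive (as $\bm x\notin pL_p$), so \Cref{lem:asunimodular} again gives $\bm x\sim_L\g\bm u$. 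Since association is symmetric, in either case there is $\sigma\in O(L_p)$ with $\sigma(\g\bm u)=\bm x$, whence $\sigma(\bm u)=\g^{-1}\bm x$ by linearity. Because $\g\equiv1\pmod{p^k}$ forces $\g^{-1}\equiv1\pmod{p^k}$, we have $(\g^{-1}-1)\bm x\in p^kL_p$, so $\ol{\sigma(\bm u)}=\ol{\bm x}$. This establishes the first inequality.

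For the ``in particular'' statement I would compare $R(Q;\bm u,p^{k+j})/\!\sim_k$ with $R(Q;\bm u,p^k)$. Reduction modulo $p^k$ is a well-defined injection $R(Q;\bm u,p^{k+j})/\!\sim_k\hookrightarrow R(Q;\bm u,p^k)$, since primitivity and the weaker congruence $Q(\bm x)\equiv Q(\bm u)\pmod{p^k}$ are preserved. When $j=0$ the two sets coincide and $\sim_k$ is trivial, so equality is immediate. When $j>0$ and $L_p$ is unimodular I would prove surjectivity by lifting: given primitive $\bm x$ with $Q(\bm x)-Q(\bm u)=p^kc$, I seek $\bm x'=\bm x+p^k\bm z$ with $Q(\bm x')\equiv Q(\bm u)\pmod{p^{k+j}}$. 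Since $\bm x$ is primitive in a unimodular lattice it is simple of index $0$ (as in the proof of \Cref{lem:asunimodular}), so $B(\bm x,\cdot)$ attains a unit value $B(\bm x,\bm y)\in\Z_p^\times$; writing $\bm z=t\bm y$ reduces the condition to a quadratic congruence in $t$ whose linear coefficient is a unit, which Hensel's Lemma solves. The resulting $\bm x'$ lies in $R(Q;\bm u,p^{k+j})$ and reduces to $\bm x$, giving surjectivity, hence $|R(Q;\bm u,p^{k+j})/\!\sim_k|=r(Q;\bm u,p^k)$ and the claimed bound.

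The main obstacle is precisely this surjectivity in the unimodular case $j>0$: one must genuinely lift a solution of $Q(\bm x)\equiv Q(\bm u)\pmod{p^k}$ to one modulo $p^{k+j}$ without altering its residue modulo $p^k$, and this is exactly the point at which unimodularity is indispensable, via the implication that a primitive vector is simple of index $0$ and the consequent surjectivity of $B(\bm x,\cdot)$ onto $\Z_p$. The first inequality, by contrast, follows fairly directly from \Cref{lem:QxQgammau} combined with the two association criteria \Cref{lem:asnux20} and \Cref{lem:asunimodular}.
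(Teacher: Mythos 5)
Your proof of the displayed inequality is correct and follows essentially the same route as the paper: combine \eqref{eq:localfactorexpression} with \Cref{lem:QxQgammau} and the association criteria \Cref{lem:asnux20} and \Cref{lem:asunimodular} to realize each class of $R(Q;\bm u,p^{k+j})/\sim_k$ as some $\ol{\sigma(\bm u)}$ with $\sigma\in O(L_p)$. The one place you go beyond the paper is the ``in particular'' clause: the paper's proof stops at the first inequality and leaves the comparison of $|R(Q;\bm u,p^{k+j})/\sim_k|$ with $r(Q;\bm u,p^{k})$ implicit, whereas you correctly note that for $j>0$ this needs the reduction map to be surjective and supply the Hensel lifting (using that a primitive vector in a unimodular $L_p$ satisfies $v(L_p;\bm x)=0$, so $B(\bm x,\cdot)$ attains unit values); that argument is sound and is genuinely required for the lemma as stated and as it is later applied in \Cref{thm:generalclassformula}.
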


\begin{proof}
	By \Cref{lem:QxQgammau}, for $\bm x\in R(Q;\bm u,p^{k+j})$, there exists $\g\in\Z_p^\times$ with $\g\equiv1\Pmod{p^k}$ such that $Q(\bm x)=Q(\g\bm u)$. Since $Q(\bm x)=Q(\g\bm u)$, if $j=0$, then $\bm x\sim_L \g\bm u$ by Lemma \ref{lem:asnux20}. If $j\in\N$ and $L_p$ is unimodular, then $\bm x\sim_L \g\bm u$ by Lemma \ref{lem:asunimodular}. In either case, there exists $\s_{\bm{x}}\in O(L_p)$ such that $\s_{\bm{x}}(\g\bm u)=\bm x$ and so $\s_{\bm{x}}(\bm u)\equiv\bm x\Pmod{p^k}$.
	
	It is not hard to check that $\ol{\sigma_{\bm{x}}(\bm{u})}$ is well-defined for $\ol{\bm{x}}\in R(Q;\bm u,p^{k+j})/\sim_k$ and different elements of $R(Q;\bm u,p^{k+j})/\sim_k$ give distinct elements of $\{\ol{\sigma(\bm u)}\in L_p/p^kL_p:\sigma\in O(L_p)\}$.
	Therefore
	\begin{align*}
		\left|R\left(Q;\bm u,p^{k+j}\right)\Big/\sim_k\right|&=\left|\left\{\ol{\s_{\bm{x}}(\bm u)}\in L_p/p^kL_p : \ol{\bm{x}}\in R\left(Q;\bm u,p^{k+j}\right)\Big/\sim_k\right\}\right|\\
		&\le \left|\left\{\ol{\s(\bm u)}\in L_p/p^kL_p : \s\in O(L_p)\right\}\right| 
	\end{align*}
	Combining with \eqref{eq:localfactorexpression} implies that 
	\[
		\left|R\left(Q;\bm u,p^{k+j}\right)\Big/\sim_k\right| \le \left|\left\{\ol{\s(\bm u)}\in L_p/p^kL_p : \s\in O(L_p)\right\}\right| = \left[O(L_p) : O\left(L_p+\tfrac{\bm u}{p^k}\right)\right]. \qedhere
	\]
\end{proof}

For $\cond\in\N$, we set  $\Omega = \Omega_\cond := \{p : p\mid\cond\}$. For a shifted lattice $L+\frac{\bm u}{\cond}$ of conductor $\cond$, we split the primes appearing in $\Omega$ via
\begin{align*}
	\Omega_{1,\cond}(L,\bm{u}) = \Omega_1\left(L+\tfrac{\bm u}{\cond}\right) &:= \left\{p\in\Omega_\cond : Q(\bm u)\in \Z_p^\times \text{ or ($L_p$ is unimodular and } Q(\bm{u})\in p\Z_p)\right\},\\
	\Omega_{2,\cond}(L,\bm{u}) = \Omega_2\left(L+\tfrac{\bm u}{\cond}\right) &:= \left\{p\in\Omega_\cond : L_p \text{ is not unimodular and } Q(\bm u)\in p\Z_p)\right\}.
\end{align*}
Note that since $L_p$ is unimodular if and only if $p\nmid d_L$, we have $p\in \Omega_{2,\cond}(L,\bm{u})$ if and only if $p\mid \gcd(d_L,Q(\bm{u}),\cond)$. We assume throughout that $\Omega_{2,\cond}(L;\bm{u})=\emptyset$, so that 
\begin{equation}\label{eqn:Omegacgoodsplit}
\Omega_{\cond}=\Omega_{1,\cond}(L;\bm{u}).
\end{equation}
Set $\cond_{p}:=\ord_{p}(\cond)\ge 1$. We often fix the lattice $L$ and vary $\bm{u}$, omitting the dependence on $L$ in the notation in such cases.

\begin{thm}\label{thm:generalclassformula}
	Let $L$ be an integral $\Z$-lattice on a quadratic space $(V,Q)$ over $\Q$ and let $\bm u\in L$ be a primitive vector with $Q(\bm u)\ne0$. Let $\cond$ be a positive odd integer for which $\Omega_{2,\cond}(L;\bm{u})=\emptyset$. Then
	\[
		\frac{m^+\left(L+\frac{\bm u}{\cond}\right)}{m^+(L)} = \frac{2m\left(L+\frac{\bm u}{\cond}\right)}{m^+(L)} \ge \prod_{p\mid \cond} \frac{r\left(Q;\bm u,p^{\cond_p}\right)}{2}.
	\]
\end{thm}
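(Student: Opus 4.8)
The plan is to reduce the claimed global inequality to a product of purely local index bounds, one for each prime $p\mid\cond$, and then invoke Lemma \ref{lem:index_unit_unimodular}. The asserted equality is immediate from \eqref{eqn:MassRelate}, which gives $m^+(L+\frac{\bm u}{\cond})=2m(L+\frac{\bm u}{\cond})$. For the inequality, the mass relation \eqref{eqn:massrelationZilong} yields
\[
	\frac{m^+\left(L+\frac{\bm u}{\cond}\right)}{m^+(L)} = \prod_{p\mid\cond}\left[\SO(L_p):\SO\left(L_p+\tfrac{\bm u}{\cond}\right)\right],
\]
so it suffices to prove, for each $p\mid\cond$, the local bound $[\SO(L_p):\SO(L_p+\frac{\bm u}{\cond})]\ge \frac12 r(Q;\bm u,p^{\cond_p})$ and then multiply over $p\mid\cond$.

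Fix $p\mid\cond$. First I would replace the special orthogonal groups by the full orthogonal groups. The derivation of \eqref{eqn:OSOinequality} is purely formal---it uses only that the index of $\SO$ in $O$ is at most $2$---and applies verbatim over $\Z_p$, with $L$ replaced by $L_p$ and $\frac{\bm u}{\cond}$ rewritten as $\frac{\bm{u_p}}{p^{\cond_p}}$, yielding
\[
	\left[\SO(L_p):\SO\left(L_p+\tfrac{\bm{u_p}}{p^{\cond_p}}\right)\right]\ge\tfrac12\left[O(L_p):O\left(L_p+\tfrac{\bm{u_p}}{p^{\cond_p}}\right)\right].
\]
I would then check the hypotheses of Lemma \ref{lem:index_unit_unimodular} for $\bm{u_p}$. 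Primitivity of $\bm u$ over $\Z$ means its coordinates are globally coprime, so $\bm u\notin pL_p$, and rescaling by the $p$-adic unit $\condd^{-1}$ shows $\bm{u_p}$ is primitive in $L_p$. Since $Q(\bm{u_p})=\condd^{-2}Q(\bm u)$ differs from $Q(\bm u)$ by a $p$-adic unit, $v_p(Q(\bm{u_p}))=v_p(Q(\bm u))=:j$, which is finite because $Q(\bm u)\ne0$. The assumption $\Omega_{2,\cond}(L;\bm u)=\emptyset$, i.e.\ \eqref{eqn:Omegacgoodsplit}, forces $p\in\Omega_{1,\cond}(L;\bm u)$, so either $j=0$ or $L_p$ is unimodular with $j\ge1$---exactly the dichotomy the lemma requires. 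Lemma \ref{lem:index_unit_unimodular} with $k=\cond_p$ then gives $[O(L_p):O(L_p+\frac{\bm{u_p}}{p^{\cond_p}})]\ge r(Q;\bm{u_p},p^{\cond_p})$.

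It remains to replace $\bm{u_p}$ by $\bm u$ in the counting function. The map $\bm x\mapsto\condd\bm x$ is a bijection of $L_p/p^{\cond_p}L_p$ preserving primitivity and satisfying $Q(\condd\bm x)=\condd^2 Q(\bm x)$; since $\condd^2 Q(\bm{u_p})=Q(\bm u)$, it carries $R(Q;\bm{u_p},p^{\cond_p})$ bijectively onto $R(Q;\bm u,p^{\cond_p})$, so $r(Q;\bm{u_p},p^{\cond_p})=r(Q;\bm u,p^{\cond_p})$. Chaining the three bounds gives the desired local estimate for each $p\mid\cond$, and the product over $p\mid\cond$ finishes the proof. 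I do not expect a genuine obstacle here: the substantive work has already been isolated in Lemma \ref{lem:index_unit_unimodular} (which in turn rests on the associate criteria of Section \ref{sec:associates}), and the theorem is an assembly of \eqref{eqn:massrelationZilong}, the formal comparison of $O$ and $\SO$, and that lemma. The only points demanding care are the localization bookkeeping $\frac{\bm u}{\cond}=\frac{\bm{u_p}}{p^{\cond_p}}$ and confirming the unit-rescaling invariance of $r(Q;\bm u,p^{\cond_p})$, neither of which is serious.
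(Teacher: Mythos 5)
Your proposal is correct and follows essentially the same route as the paper: the equality via \eqref{eqn:MassRelate}, the factorization into local indices via \eqref{eqn:massrelationZilong}, the passage from $\SO$ to $O$ at the cost of a factor $\tfrac12$ via \eqref{eqn:OSOinequality}, and the local bound from Lemma \ref{lem:index_unit_unimodular} under the hypothesis $\Omega_{2,\cond}(L;\bm{u})=\emptyset$. Your explicit verification that $r(Q;\bm{u_p},p^{\cond_p})=r(Q;\bm u,p^{\cond_p})$ via the unit rescaling $\bm x\mapsto\condd\bm x$ is a welcome detail that the paper leaves implicit.
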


\begin{proof}
	We first combine \eqref{eqn:MassRelate}, \eqref{eqn:massrelationZilong}, \eqref{eqn:indexppower}, and then \eqref{eqn:OSOinequality} to bound
	\begin{align*}\nonumber
		\frac{2m\left(L+\frac{\bm u}{\cond}\right)}{m^+(L)} &{=} \frac{m^+\left(L+\frac{\bm u}{\cond}\right)}{m^+(L)} {=} \prod_{p\mid\cond} \left[\SO(L_p) : \SO\left(L_p+\tfrac{\bm u}{\cond}\right)\right]
		= \prod_{p\mid\cond} \left[\SO(L_p) : \SO\left(L_p+\tfrac{\bm{u_p}}{p^{\cond_P}}\right)\right]\\
		&{\ge} \prod_{p\mid\cond} \tfrac12\left[O(L_p) : O\left(L_p+\tfrac{\bm{u_p}}{p^{\cond_p}}\right)\right].
	\end{align*}
	By assumption, \eqref{eqn:Omegacgoodsplit} holds. Hence for $p\mid \cond$ we have $p\in\Om_{1,\cond}(\bm u)$, so Lemma \ref{lem:index_unit_unimodular} yields
	\[
 		\left[O(L_p):O\left(L_p+\tfrac{\bm{u_p}}{p^{\cond_p}}\right)\right] \ge r\left(Q;\bm{u_p},p^{\cond_p}\right).\qedhere
	\]
\end{proof}

\section{Proof of Theorem \ref{thm:classbound}, Corollary \ref{cor:ClassNumberOne}, and Theorem \ref{thm:diagonalclassformulaintro}}\label{sec:proofmainresult}

By Theorem \ref{thm:generalclassformula}, the ratio of the masses for $ L+\frac{\bm{u}}{\cond} $ and $ L $ is bounded below by the number of solutions of some quadratic congruence in $ \Z_{p} $ for which the conductor is divisible by $ p $. In general, it is difficult to determine the number of solutions in a local ring $ R_{p} $, but in the ring of $ p $-adic integers, for $m\in\Z$ and $Q$ a diagonal quadratic form, Li and Ouyang \cite{li_counting_2018} deduced some explicit formulas for counting the number of solutions to the quadratic congruence $Q(\bm{x})\equiv m\Pmod{p^k}$ with $\bm{x}$ restricted to certain subsets of $\Z^{\ell}/p^{k}\Z^{\ell}$. In this section, based on \cite{li_counting_2018}, we study the number of solutions of such quadratic congruences with additional restrictions and then prove our \Cref{thm:classbound} by applying Theorem \ref{thm:generalclassformula}.

We assume that $ L=\mathbb{Z}^{\ell} $ is associated with a diagonal quadratic form in $ \mathbb{Z}[x_{1},\cdots,x_{\ell}] $, 
\begin{align*}
	Q(\bm{x})=Q\left(x_{1},\ldots,x_{\ell}\right):=a_{1}x_{1}^{2}+\cdots+a_{\ell}x_{\ell}^{2}.
\end{align*}
Let $\bm u\in\Z^\ell$ be a primitive vector (which in this case means $\gcd(u_1,u_2,\dots,u_{\ell})=1$) with $m:=Q(\bm u)\ne 0$ and $I=\{1,\dots,\ell\}$. Define $I_{\bm u,p}:=\{j\in I:p\nmid u_j\}$. The {\it depth} of $Q$ at an odd prime $p$ associated to $\bm u$ is
\begin{equation}\label{eqn:depthdef}
	d_{\bm{u},p}=d_{\bm{u},p}(Q):=\min_{j\in I_{\bm{u},p}}\left(\ord_{p}(a_j)+1\right).
\end{equation}

For $S\subseteq I$, we denote
\[
	R_S\left(Q;\bm u,p^k\right) := \left\{\bm x\in\Z^\ell/p^k\Z^\ell : Q(\bm x)\equiv Q(\bm u)\pmod{p^k}\text{ and }p\nmid x_j\text{ for all }j\in S\right\} 
\]
and set (also see the corresponding notation $ N_{J}(Q;c,n) $ in \cite[p.\hskip 0.1cm 2]{li_counting_2018})
\[
	r_{S}\left(Q;\bm{u},p^{k}\right):=\# R_{S}\left(Q;\bm{u},p^{k}\right).
\]

Suppose that $p\nmid m$ or ($p\mid m$ and $p\nmid\prod_{d=1}^\ell a_d$). In the former case, $Q(\bm u)\equiv m\Pmod{p^k}$ implies that there exists some $j\in I$ such that $p\nmid a_ju_j^2$, and hence neither $a_j$ nor $u_j$ is divisible by $p$. In the latter case, since $\bm u$ is primitive, there exists some $j$ such that $p\nmid u_j$ and $p\nmid\prod_{d=1}^\ell a_d$ implies that $p\nmid a_j$ in particular. Combining, there must exist some $j=j_p$ such that $p\nmid a_j$ and $p\nmid u_j$ in both cases. Note that we have
\begin{equation}\label{eqn:rJ0lower}
	r\left(Q;\bm{u},p^k\right) \ge r_{\{j_p\}}\left(Q;\bm{u},p^k\right) \ne 0.
\end{equation}

For $p\neq 2$, we let
\[
	I_{p}:=I_{\bm{a},p}=\{j\in I:p\nmid a_j\}
\]
and set $\De_p:=(\frac{-1}{p})p$, where $(\frac{-1}{p})$ is the Legendre symbol. Let $t_p:=\# I_p$ and for $J\subseteq I$ set
\[
	s_{J,p}=s_{\bm{a},J,p}:=\#\{j\in J: p\nmid a_j\}\quad\text{ and }\quad r_{J,p}:=\# \left\{j\in J : \left(\tfrac{a_j}{p}\right)=-1\right\}.
\]
In particular, we note that $s_{\{j_p\},p}=1$ by construction and we set $r_p:=r_{\{j_p\},p}$.
The evaluation of $r_{\{j_p}(Q;\bm u;p^k)$ from \cite{li_counting_2018} splits into a number of cases, so for ease of notation we define
\begin{equation}\label{eqn:Cpmtprp}
	 C_{p}\left(m,t_{p},r_{p}\right):=
	 \begin{cases}
	 	\dfrac{1+p\left(\frac mp\right)}{p-1}&\text{if $p\nmid m$, $2\nmid t_p$, and $r_{p}=0$,}\\
	 	\dfrac{1-p\left(\frac mp\right)}{p-1}&\text{if $p\nmid m$, $2\nmid t_p$, and $r_{p}=1$,}\\
	 	\dfrac{-1-\left(\frac {-m}p\right)}{p-1}&\text{if $p\nmid m$, $2\mid t_p$, and $r_{p}=0 $,}\\
	 	\dfrac{1-\left(\frac {-m}p\right)}{p-1} &\text{if $p\nmid m$, $2\mid t_p$, and $r_{p}=1$,}\\
	 	-1 &\text{if $p\mid m$ and $2\nmid t_p$,}\\
	 	1&\text{if $p\mid m$, $2\mid t_p$, and $r_{p}=0$.}\\
	 	-1&\text{if $p\mid m$, $2\mid t_p$, \and $r_p=1$.}
	 \end{cases}
\end{equation}
We next combine \eqref{eqn:rJ0lower} with \cite[Theorem 4.4 (2)]{li_counting_2018} (taking $J=\{j_p\}$, $t=\ell$, and $a=k$).
\begin{lem}\label{lem:exactformula_modulooddprime}
	Let $k\in\N$ and $ p $ an odd prime. Suppose that $ p\nmid m $ or ($ p\mid m $ and $ p\nmid \prod_{j=1}^{\ell}a_j $).
Then 
	\begin{align*}
		r\left(Q;\bm{u},p^k\right) \ge r_{\{j_p\}}\left(Q;\bm{u},p^k\right)= p^{(\ell-1)(k-1)}p^{\ell}\left(1-\tfrac1p\right)\left(\tfrac1p+\tfrac{\Delta_p^{\left\lfloor\frac{t_p}{2}\right\rfloor}}{p^{t_p}} C_p(m,t_p,r_p)\right).
	\end{align*}
\end{lem}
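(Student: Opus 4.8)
The inequality $r(Q;\bm{u},p^k)\ge r_{\{j_p\}}(Q;\bm{u},p^k)$ is nothing but \eqref{eqn:rJ0lower}: any $\bm x\in\Z^\ell/p^k\Z^\ell$ with $p\nmid x_{j_p}$ is in particular primitive, so $R_{\{j_p\}}(Q;\bm u,p^k)\subseteq R(Q;\bm u,p^k)$. The genuine content is therefore the exact evaluation of $r_{\{j_p\}}(Q;\bm u,p^k)$, and the plan is to deduce it directly from Li and Ouyang's counting formula \cite[Theorem 4.4 (2)]{li_counting_2018}.

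First I would confirm that the standing hypotheses put us in the scope of \cite[Theorem 4.4 (2)]{li_counting_2018}. The existence of an index $j_p$ with $p\nmid a_{j_p}$ and $p\nmid u_{j_p}$ was recorded in the paragraph preceding \eqref{eqn:rJ0lower}, handling the cases $p\nmid m$ and ($p\mid m$, $p\nmid\prod_{j=1}^\ell a_j$) separately. The structural consequence I want is $j_p\in I_p$, so that the singleton $J=\{j_p\}$ has $s_{\{j_p\},p}=1$: we are counting solutions of $Q(\bm x)\equiv m\pmod{p^k}$ under a nonvanishing condition on a single variable whose coefficient is a $p$-adic unit, with $r_p=r_{\{j_p\},p}\in\{0,1\}$ recording whether $a_{j_p}$ is a quadratic residue modulo $p$.

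Next I would apply \cite[Theorem 4.4 (2)]{li_counting_2018} under the dictionary $J=\{j_p\}$, $t=\ell$, $a=k$, with modulus $p^k$ and target $m$. Their formula writes the count as the Hensel-type lifting factor $p^{(\ell-1)(k-1)}$, accounting for the lifts of each solution modulo $p$ up to one modulo $p^k$, times a local density computed modulo $p$; this density is organized by whether $p\mid m$ and by the parity of $t_p=\#I_p$, with the quadratic-character contributions carried by $\Delta_p=\leg{-1}{p}p$ and the symbols $\leg{a_j}{p}$. Factoring $p^\ell(1-\tfrac1p)$ out of their main term and collecting what remains into $\tfrac1p+\tfrac{\Delta_p^{\flo{\frac{t_p}{2}}}}{p^{t_p}}C_p(m,t_p,r_p)$ is the rearrangement that yields the stated shape.

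The main obstacle, and effectively the entire labor, will be to reconcile the case structure of Li and Ouyang's formula with the seven-fold definition of $C_p(m,t_p,r_p)$ in \eqref{eqn:Cpmtprp}. I would trace how the restriction $s_{\{j_p\},p}=1$ enters their character sum, forcing exactly one unit-coefficient variable to be nonzero, and then check that the four subcases for $p\nmid m$ (split by the parity of $t_p$ and by $r_p$) and the three subcases for $p\mid m$ reproduce the coefficients listed in \eqref{eqn:Cpmtprp} precisely. Once this bookkeeping is verified, the claimed identity follows.
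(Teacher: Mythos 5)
Your proposal matches the paper exactly: the paper gives no separate proof of this lemma, stating only that it follows by combining \eqref{eqn:rJ0lower} with \cite[Theorem 4.4 (2)]{li_counting_2018} under the substitutions $J=\{j_p\}$, $t=\ell$, $a=k$, which is precisely your plan. The bookkeeping you describe (checking that $s_{\{j_p\},p}=1$, $r_p\in\{0,1\}$, and that the case structure of Li--Ouyang's formula reassembles into the seven-fold definition of $C_p(m,t_p,r_p)$ in \eqref{eqn:Cpmtprp}) is exactly the content the paper leaves implicit in that citation.
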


We next bound $\frac{\Delta_p^{\lfloor\frac{t_p}{2}\rfloor}}{p^{t_p}} C_{p}\left(m,t_p,r_p\right)$ to obtain explicit inequalities for $r(Q;\bm{u},p^k)$ from Lemma \ref{lem:exactformula_modulooddprime}.

\begin{lem}\label{lem:formula_modulooddprime}
	Let $k\in\N$ and $p$ an odd prime. Suppose that $p\nmid m$ or ($p\mid m$ and $t_p\geq 3$).
	\begin{enumerate}
		\item If $p\nmid m$, then
		\[
			r\left(Q;\bm{u},p^k\right) \ge p^{(\ell-1)k} \left(1-\tfrac1p\right)
			\begin{cases}
				\frac13 & \text{if }p=3,\\				1-\frac{2}{p-1} & \text{if }p\ge5.\\
			\end{cases}
		\] 
		
		\item If $p\mid m$ and $t_p\geq 3$, then 
		\[
			r\left(Q;\bm{u},p^k\right) \ge p^{(\ell-1)k}\left(1-\tfrac1p\right)^2.
		\]
	\end{enumerate}
\end{lem}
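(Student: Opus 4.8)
The plan is to feed the exact evaluation of \Cref{lem:exactformula_modulooddprime} into a single reduction and then estimate one explicit error term. Since $(\ell-1)(k-1)+\ell=(\ell-1)k+1$, the prefactor simplifies and \Cref{lem:exactformula_modulooddprime} becomes
\[
	r\left(Q;\bm{u},p^k\right) \ge p^{(\ell-1)k}\left(1-\tfrac1p\right)\left(1+pE_p\right),\qquad E_p:=\frac{\De_p^{\flo{t_p/2}}}{p^{t_p}}C_p\left(m,t_p,r_p\right).
\]
Thus part (1) reduces to showing $1+pE_p\ge\frac13$ for $p=3$ and $1+pE_p\ge1-\frac{2}{p-1}$ for $p\ge5$, while part (2) reduces to $1+pE_p\ge1-\frac1p$, which, multiplied by the factor $1-\frac1p$ already present, produces the claimed $(1-\frac1p)^2$.

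The main estimate I would record is the size of $pE_p$. Because $|\De_p|=p$, we have $|pE_p|=p^{1+\flo{t_p/2}-t_p}\,|C_p|$, and the exponent equals $\tfrac{1-t_p}{2}$ when $t_p$ is odd and $1-\tfrac{t_p}{2}$ when $t_p$ is even; in either case it is $\le-1$ as soon as $t_p\ge3$. Reading off \eqref{eqn:Cpmtprp}, one has $|C_p|=1$ whenever $p\mid m$, whereas $|C_p|\le\frac{1+p}{p-1}$ for odd $t_p$ and $|C_p|\le\frac{2}{p-1}$ for even $t_p$ whenever $p\nmid m$. Part (2) is then immediate: its hypothesis $t_p\ge3$ forces the exponent $\le-1$, so $|pE_p|\le\frac1p$ and hence $1+pE_p\ge1-\frac1p$. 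For part (1) with $t_p\ge3$ odd or $t_p\ge4$ even, combining the exponent bound $\le-1$ with the bounds on $|C_p|$ gives $|pE_p|\le\frac{1+p}{p(p-1)}$ respectively $|pE_p|\le\frac{2}{p(p-1)}$; one then checks that $1-\frac{1+p}{p(p-1)}$ equals $\frac13$ exactly at $p=3$ and is $\ge1-\frac{2}{p-1}$ for $p\ge5$ (using $1+p\le2p$), which closes all of these cases.

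The remaining and genuinely delicate cases of part (1) are the small depths $t_p\in\{1,2\}$, where the exponent is $0$ and the uniform bounds on $|C_p|$ are too weak to conclude. For $t_p=1$ I would exploit that $\bm{u}$ is itself a solution: here $I_{\bm{u},p}=\{j_p\}$, so reducing $m=Q(\bm u)$ modulo $p$ gives $\leg{m}{p}=\leg{a_{j_p}}{p}$, whence $r_p=0\Leftrightarrow\leg{m}{p}=1$ and $r_p=1\Leftrightarrow\leg{m}{p}=-1$; substituting into \eqref{eqn:Cpmtprp} yields $C_p=\frac{1+p}{p-1}>0$ in both subcases, and since $\flo{t_p/2}=0$ forces $pE_p=C_p$ we obtain $1+pE_p=\frac{2p}{p-1}>1$, which comfortably beats both targets. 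For $t_p=2$ with $p\ge5$ the estimate $|pE_p|\le|C_p|\le\frac{2}{p-1}$ already gives the required $1+pE_p\ge1-\frac{2}{p-1}$.

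The one case that cannot be handled by the triangle inequality is $t_p=2$ at $p=3$, where $1-\frac{2}{p-1}=0$; here I would substitute the explicit values of $C_3$ from \eqref{eqn:Cpmtprp}, use $pE_3=-C_3$ coming from $\De_3^{\flo{t_3/2}}=-3$, and verify $1+3E_3\ge\frac13$ directly in each of the four sign-subcases. This final verification is where I expect the main obstacle to lie: the argument is tightest precisely here, the exact signs produced by $\De_p^{\flo{t_p/2}}$ and by the Legendre symbols inside $C_p$ must be tracked with care, and any slip in those signs would spuriously drive the bound to $0$. Organizing the proof so that the coarse estimates of the previous paragraph dispatch every case except this last one, and then resolving it by explicit sign bookkeeping, is the heart of the argument.
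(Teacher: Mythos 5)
Your reduction, your part (2), and your treatment of part (1) for $t_p\ge 3$ and for $t_p=2$ with $p\ge 5$ all match the paper's argument (the paper packages the same estimates as the bounds $-1\le C_p\le\frac{p+1}{p-1}$ and $|C_p|\le\frac{2}{p-1}$ for even $t_p$). Your handling of $t_p=1$ is a genuinely different and rather pleasant route: by reducing $m=Q(\bm u)\equiv a_{j_p}u_{j_p}^2\Pmod{p}$ you deduce $\leg{m}{p}=\leg{a_{j_p}}{p}$ and hence $C_p=\frac{p+1}{p-1}$ directly, whereas the paper gets the same conclusion indirectly from the non-vanishing of the count. (Minor slip: the set that becomes $\{j_p\}$ when $t_p=1$ is $I_p=I_{\bm a,p}$, not $I_{\bm u,p}$; the congruence you need still holds because the terms with $p\mid a_j$ vanish mod $p$.)

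The genuine gap is exactly where you predicted it, but your proposed fix does not work. For $p=3$, $t_3=2$ one has $3E_3=-C_3$, and running through the four sign-subcases of \eqref{eqn:Cpmtprp} gives $C_3\in\{-1,0\}$ for $r_3=0$ and $C_3\in\{0,1\}$ for $r_3=1$; in the subcase $r_3=1$, $\leg{-m}{3}=-1$ you get $C_3=1$ and hence $1+3E_3=0$, which is \emph{not} $\ge\frac13$. No amount of sign bookkeeping removes this subcase, because it is a consistent assignment of Legendre symbols; what rules it out is an input external to the table: by \eqref{eqn:rJ0lower} the vector $\bm u$ itself lies in $R_{\{j_p\}}(Q;\bm u,p^k)$, so $r_{\{j_p\}}(Q;\bm u,p^k)\neq 0$, and since \Cref{lem:exactformula_modulooddprime} is an \emph{equality} for $r_{\{j_p\}}$, the factor $\frac1p+E_p$ cannot vanish. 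This is precisely \eqref{eqn:Cpnonvanish} in the paper, and it is the sole ingredient that closes the $p=3$, $t_p=2$ case (it is also the paper's tool for $t_p=1$, where your direct argument substitutes for it). Without invoking the positivity of the count, your final verification terminates in a subcase where the bound degenerates to $0$, so the proof as proposed is incomplete.
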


\begin{proof}
	We first state a few properties which are useful throughout the proof. First note that
	\begin{equation}\label{eqn:CpBound}
		-1\leq C_p\left(m,t_p,r_p\right)\leq \tfrac{p+1}{p-1}.
	\end{equation}
	Moreover, by \eqref{eqn:rJ0lower} we have $r_{\{j_p\}}(Q;\bm{u},p^k)\neq 0$, and hence the equality in Lemma \ref{lem:exactformula_modulooddprime} implies that 
	\begin{equation}\label{eqn:Cpnonvanish}
		\tfrac{\Delta_p^{\left\lfloor \frac{t_p}{2}\right\rfloor}}{p^{t_p}}C_p(m,t_p,r_p)\neq -\tfrac1p.
	\end{equation}
	(1) Suppose that $p\nmid m$. By Lemma \ref{lem:exactformula_modulooddprime}, we have 
	\[
		r\left(Q;\bm{u},p^k\right) \geq p^{(\ell-1)k}\left(1-\tfrac{1}{p}\right)\left(1+p\tfrac{\Delta_p^{\left\lfloor\frac{t_p}{2}\right\rfloor}}{p^{t_p}} C_{p}\left(m,t_p,r_p\right)\right).
	\]
	To obtain the claim, we hence need to show that 
	\begin{equation}\label{eqn:DeltaCpBound}
		\frac{\Delta_p^{\left\lfloor\frac{t_p}{2}\right\rfloor}}{p^{t_p}} C_{p}\left(m,t_p,r_p\right)\geq
		\begin{cases}
 			-\frac{2}{9}&\text{if }p=3,\\
			-\frac{2}{p(p-1)}&\text{if }p\geq 5.
		\end{cases}
	\end{equation}
	If $t_p=1$, then \eqref{eqn:Cpnonvanish} implies that $C_p(m,t_p,r_p)\neq -1$, from which we conclude that
	\[
		C_p(m,t_p,r_p)=\tfrac{p+1}{p-1}>0.
	\]
	If $t_p\geq3$ is odd, then \eqref{eqn:CpBound} implies that
	\[
		\frac{\Delta_p^{\left\lfloor \frac{t_p}{2}\right\rfloor}}{p^{t_p}}C_p(m,t_p,r_p)\geq -\tfrac{1}{p^2} \tfrac{p+1}{p-1}\geq 
		\begin{cases}
			-\frac{2}{9}&\text{if }p=3,\\
			-\frac{2}{p(p-1)}&\text{if }p\geq 5.\\
		\end{cases}
	\]
	We therefore see that \eqref{eqn:DeltaCpBound} holds for $t_p$ odd. 

	On the other hand, if $t_p$ is even, then \eqref{eqn:CpBound} (combined with \eqref{eqn:Cpnonvanish} for $p=3$) implies that
	\[
		\frac{\Delta_p^{\left\lfloor \frac{t_p}{2}\right\rfloor}}{p^{t_p}}C_p(m,t_p,r_p)\geq 
		\begin{cases}
			-\frac{1}{9}&\text{if }p=3,\\
 			-\frac{2}{p(p-1)}&\text{if }p\geq 5.
		\end{cases}
	\]
	We therefore conclude \eqref{eqn:DeltaCpBound}, completing the proof of part (1).\\
	(2) Suppose that $p\mid m$ and $t_p\geq 3$. Again using \Cref{lem:exactformula_modulooddprime}, we have to show that 
	\[
		\frac{\Delta_p^{\left\lfloor\frac{t_p}{2}\right\rfloor}}{p^{t_p}} C_{p}\left(m,t_p,r_p\right)\geq -\tfrac{1}{p^2}.
	\]
 	Since $t_p\geq 3$, this follows immediately.
\end{proof}

We next bound $r(Q;\bm{u},r,p^k)$ for $k\geq \max\{d_{\bm{u},p},r\}$.

\begin{lem}\label{lem:formula_notunimodular_notunit}
	Let $k,r\in\N$ and $p$ a prime. If $ k\ge\max\{d_{\bm u,p},r\}$, then
	\[
		r\left(Q;\bm{u},r,p^{k}\right)\ge p^{(\ell-1)\left(k-\max\{d_{\bm{u},p},r\}\right)}.
	\]
\end{lem}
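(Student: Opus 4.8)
We need to bound below $r(Q;\bm u,r,p^k)$, the number of $\bm x \in \Z^\ell/p^k\Z^\ell$ with $Q(\bm x)\equiv Q(\bm u)\pmod{p^k}$, $\bm x\equiv \bm u\pmod{p^r}$, and $\bm x\notin p\Z^\ell$, under the hypothesis $k\geq \max\{d_{\bm u,p},r\}$. The key observation is that $\bm x\equiv\bm u\pmod{p^r}$ already forces $\bm x$ to be primitive (since $\bm u$ is primitive and $r\geq 1$), so the condition $\bm x\notin p\Z^\ell$ is automatic and we are counting solutions of $Q(\bm x)\equiv Q(\bm u)\pmod{p^k}$ lying in the residue class $\bm u\pmod{p^r}$.

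**Plan.** The strategy is a Hensel-type lifting argument centered on the index $j_0\in I_{\bm u,p}$ achieving the depth, i.e.\ $\ord_p(a_{j_0})+1 = d_{\bm u,p}=:d$, so that $p\nmid u_{j_0}$ and $\ord_p(a_{j_0})=d-1$. First I would freely assign the coordinates $x_i$ for $i\neq j_0$: I fix each $x_i$ to be \emph{any} lift of $u_i \pmod{p^r}$, giving $p^{(\ell-1)(k-r)}$ independent choices across the $\ell-1$ indices $i\neq j_0$ (each $x_i$ ranges over a residue class mod $p^r$ inside $\Z/p^k\Z$). For each such choice, the congruence $Q(\bm x)\equiv Q(\bm u)\pmod{p^k}$ reduces to a single one-variable congruence $a_{j_0}x_{j_0}^2 \equiv m - \sum_{i\neq j_0} a_i x_i^2 \pmod{p^k}$ in the unknown $x_{j_0}$, to be solved in the class $x_{j_0}\equiv u_{j_0}\pmod{p^r}$.

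**The one-variable count.** The core step is to show this residual congruence in $x_{j_0}$ has at least $p^{(\ell-1)(k-\max\{d,r\})}/p^{(\ell-1)(k-r)} = p^{(\ell-1)(r-\max\{d,r\})}$ solutions on average — but more cleanly, I would argue directly that it has at least $p^{k-\max\{d,r\}}$ solutions in the prescribed class, whence multiplying by the $p^{(\ell-1)(k-r)}$ free choices and noting $(\ell-1)(k-r) + (k-\max\{d,r\}) \geq (\ell-1)(k-\max\{d,r\})$ (since $\max\{d,r\}\geq r$ forces $(k-r)\geq(k-\max\{d,r\})$ and $\ell-1\geq \ell-1$) yields the claim; one must check the bookkeeping so the exponents combine to exactly $(\ell-1)(k-\max\{d,r\})$. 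Writing $x_{j_0}=u_{j_0}+p^r y$, substituting, and using that $\ord_p(a_{j_0})=d-1$ and $p\nmid u_{j_0}$, the congruence becomes linear in $y$ modulo a suitable power of $p$ once the quadratic term's $p$-valuation is tracked: because $a_{j_0}$ contributes valuation $d-1$ and the derivative $2a_{j_0}u_{j_0}$ has valuation exactly $d-1$ (as $p\neq 2$, $p\nmid u_{j_0}$), Hensel lifting from a solution modulo $p^{\max\{d,r\}}$ to modulo $p^k$ produces, at each step past the critical valuation, a factor of $p$, giving $p^{k-\max\{d,r\}}$ lifts.

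**Main obstacle.** The delicate part is verifying solvability of the base congruence modulo $p^{\max\{d,r\}}$ and correctly counting lifts when $2\nmid t_p$ forces interaction between the depth $d$ and the free-variable valuations — in particular ensuring the residue class constraint $x_{j_0}\equiv u_{j_0}\pmod{p^r}$ is compatible with the target value $m-\sum_{i\neq j_0}a_i x_i^2$, which it is by construction since $\bm u$ itself is a solution. The honest difficulty is bookkeeping the exponent $\max\{d,r\}$ uniformly: when $r\geq d$ the residue-class constraint dominates and the free coordinates already determine enough, while when $d>r$ the depth governs how far back the Hensel lift must start. I would handle both regimes by showing that $\bm u$ provides a \emph{seed} solution modulo $p^{\max\{d,r\}}$ and that the nondegeneracy at $j_0$ (valuation $d-1$ in the derivative) guarantees each subsequent lifting step has exactly $p$ preimages in each coordinate, so that the total lift count is $p^{(\ell-1)(k-\max\{d,r\})}$ as asserted.
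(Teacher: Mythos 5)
Your strategy---fix the coordinates $x_i$ for $i\neq j_0$ in their residue classes modulo $p^r$ and then count solutions of the resulting one-variable congruence in $x_{j_0}$---breaks down at exactly the two points you flag as ``bookkeeping.'' First, the per-fiber count $p^{k-\max\{d_{\bm{u},p},r\}}$ is false: when the derivative $2a_{j_0}x_{j_0}$ has valuation $d_{\bm{u},p}-1$, Hensel's lemma produces a \emph{unique} lift of $x_{j_0}$ at each step, not $p$ lifts. The factor $p^{\ell-1}$ per step is an intrinsically multivariable phenomenon (it counts the $\ell-1$ tangent directions of the solution hypersurface), and you have already spent those directions by fixing the other coordinates, so your argument double-counts. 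Concretely, for $Q=x_1^2+x_2^2+x_3^2$, $p=3$, $\bm{u}=(1,0,0)$, $r=1$, $k=2$, each of the $9$ choices of $(x_2,x_3)\equiv(0,0)\pmod{3}$ admits exactly one $x_1$ modulo $9$ with $x_1\equiv 1\pmod{3}$ (namely $x_1=1$), so the true count is $9=p^{(\ell-1)(k-\max\{d_{\bm{u},p},r\})}$, whereas your computation asserts at least $9\cdot 3=27$; the intermediate claim is therefore false even though the lemma's conclusion happens to hold. Second, when $r<d_{\bm{u},p}$ the fibers over arbitrary lifts of $(u_i)_{i\neq j_0}$ can be \emph{empty}: for $Q=9x_1^2+x_2^2+x_3^2$, $p=3$, $\bm{u}=(1,0,0)$, $r=1$, $k=3$, the choice $x_2=3$, $x_3=0$ forces $9x_1^2\equiv 0\pmod{27}$, hence $3\mid x_1$, which is incompatible with $x_1\equiv 1\pmod{3}$. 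That $\bm{u}$ is a solution only guarantees nonemptiness of the fiber over $(u_i)_{i\neq j_0}$ itself, not over its other lifts, so the claimed ``compatibility by construction'' fails.

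The paper sidesteps both problems by never splitting off a single coordinate: it quotes the lifting statement from the proof of Theorem B\,(1) of Li--Ouyang, namely that for $k>d_{\bm{u},p}$ every element of $R_S(Q;\bm{u},p^{k-1})$ has \emph{exactly} $p^{\ell-1}$ lifts to $R_S(Q;\bm{u},p^{k})$, takes $S=I_{\bm{u},p}$, and iterates this $k-\max\{d_{\bm{u},p},r\}$ times starting from the single seed $\bm{u}$ modulo $p^{\max\{d_{\bm{u},p},r\}}$; all resulting vectors are congruent to $\bm{u}$ modulo $p^{\max\{d_{\bm{u},p},r\}}$ and hence modulo $p^r$, so they lie in $R(Q;\bm{u},r,p^k)$. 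To repair your coordinate-splitting version you would have to restrict to the nonempty fibers and establish the correct per-fiber count (which is $p^{d_{\bm{u},p}-1}$, not $p^{k-\max\{d_{\bm{u},p},r\}}$), which in effect amounts to reproving the Li--Ouyang lifting lemma.
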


\begin{proof}
	For $k>d_{\bm u,p} $ and $S\subseteq I$, it was shown in \cite[proof of Theorem B (1)]{li_counting_2018} that for every $\bm v\in R_S(Q;\bm u,p^{k-1})$ there exist precisely $p^{\ell-1}$ elements $\bm{v_1},\dots,\bm{v_{p^{\ell-1}}}$ of $R_S(Q;\bm u,p^k) $ with $\bm{v_j}\equiv\bm v\Pmod{p^{k-1}}$.
	Suppose that $k\geq d \geq d_{\bm{u},p} $. Since $\bm{u}\in R_{I_{\bm{u},p}}(Q;\bm{u},d,p^{k})$, applying \cite[the proof of Theorem B (1)]{li_counting_2018} inductively, we conclude that 
\rm
	\[
		r\left(Q;\bm{u},d,p^{k}\right) \ge r_{I_{\bm{u},p}}\left(Q;\bm{u},d,p^{k}\right) \ge p^{(k-d)(\ell-1)}.
	\]
	Taking $d=\max\{d_{\bm u,p},r\}$, we conclude that for $k\ge \max\{d_{\bm u,p},r\}$ we have 
	\[
		r\left(Q;\bm u,r,p^k\right) \ge r\left(Q;\bm u,\max\{d_{\bm u,p},r\},p^k\right) \ge p^{\left(k-\max\{d_{\bm u,p},r\}\right)(\ell-1)}. \qedhere
	\]
\end{proof}

We next combine Lemmas \ref{lem:formula_modulooddprime} and \ref{lem:formula_notunimodular_notunit} with Theorem \ref{thm:generalclassformula} to obtain an explicit lower bound for $\frac{m(L+\frac{\bm u}{\cond})}{m(L)}$. To state the result, for an odd prime $p$ we define
\begin{equation}\label{eqn:fpdef}
	f_p =f_{p}\left(\cond_p\right):=
	\begin{cases}
		\frac19 &\text{if } p=3,\vspace{0.2cm}\\
		\frac12-\frac{3}{2p}&\text{if } p\ge5.
	\end{cases}
\end{equation}

\begin{thm}\label{thm:diagonalclassformula}
	Let $L$ be a $\Z$-lattice on an $\ell$-dimensional ($\ell\ge3$) vector space $V$ over $\Q$ associated with a positive-definite diagonal quadratic form $Q$. Let $\bm u\in L$ be a primitive vector with $Q(\bm u)\ne0$ and $\cond\in\N$ be odd such that $\gcd(d_L,Q(\bm{u}),\cond)=1$. Then we have
	\[
		\dfrac{m\left(L+\frac{\bm{u}}{\cond}\right)}{m(L)}\ge\cond^{\ell-1}\prod_{p\mid \cond}f_{p}.
	\]
	In particular, for $ \delta>0 $, there exists a constant $ C_{\delta}>0 $ depending on $ \delta $ such that 
	\begin{align*}
		\frac{m\left(L+\frac{\bm{u}}{\cond}\right)}{m(L)}\ge C_{\delta}\dfrac{\cond^{\ell-1-\delta}}{D_L^{2\ell-2}}.
	\end{align*}
\end{thm}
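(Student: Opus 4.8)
The plan is to feed the local lower bounds of \Cref{lem:formula_modulooddprime} into \Cref{thm:generalclassformula}. First I would verify that the hypotheses are met: since $\gcd(d_L,Q(\bm u),\cond)=1$, no prime $p\mid\cond$ can simultaneously divide $d_L$ and $Q(\bm u)$, so $\Omega_{2,\cond}(L,\bm u)=\emptyset$ and \Cref{thm:generalclassformula} applies. Combining it with the identity $m^+=2m$ from \eqref{eqn:MassRelate} (applied to both $L+\frac{\bm u}{\cond}$ and $L$) gives
\[
	\frac{m\left(L+\frac{\bm u}{\cond}\right)}{m(L)}\ge\prod_{p\mid\cond}\frac{r\left(Q;\bm u,p^{\cond_p}\right)}{2},
\]
so everything reduces to bounding each local factor below by $p^{(\ell-1)\cond_p}f_p$.

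Next I would run the local case analysis for each $p\mid\cond$, writing $m:=Q(\bm u)$ and $k:=\cond_p$. If $p\nmid m$, then \Cref{lem:formula_modulooddprime}(1) applies directly. If instead $p\mid m$, then $\gcd(d_L,m,\cond)=1$ forces $p\nmid d_L$, so $L_p$ is unimodular and $t_p=\ell\ge3$, whence \Cref{lem:formula_modulooddprime}(2) applies. A short computation (using $(1-\frac1p)(1-\frac2{p-1})=\frac{p-3}{p}$ for $p\ge5$ in the first case, and $\frac12(1-\frac1p)^2\ge f_p$ in the case $p\mid m$) then shows in every subcase that
\[
	\frac{r\left(Q;\bm u,p^{\cond_p}\right)}{2}\ge p^{(\ell-1)\cond_p}f_p,
\]
exactly matching the definition \eqref{eqn:fpdef} of $f_p$. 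Taking the product over $p\mid\cond$ and using $\prod_{p\mid\cond}p^{(\ell-1)\cond_p}=\cond^{\ell-1}$ yields the first inequality
\[
	\frac{m\left(L+\frac{\bm u}{\cond}\right)}{m(L)}\ge\cond^{\ell-1}\prod_{p\mid\cond}f_p.
\]

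For the second inequality I would split off the $\delta$-power: writing $\cond^{\ell-1}=\cond^{\ell-1-\delta}\cond^\delta$ and using $\cond^\delta\ge\prod_{p\mid\cond}p^\delta$ (as each $\cond_p\ge1$), it suffices to bound $\prod_{p\mid\cond}(p^\delta f_p)$ below by a positive constant independent of $\cond$ and $L$. Since $f_p\to\frac12$ as $p\to\infty$, we have $p^\delta f_p\ge1$ for all but finitely many primes; discarding these factors (each at least $1$) and then bounding below by the full product over all odd primes with $p^\delta f_p<1$ gives
\[
	\prod_{p\mid\cond}\left(p^\delta f_p\right)\ge\prod_{\substack{p\text{ odd}\\p^\delta f_p<1}}p^\delta f_p=:C_\delta>0,
\]
a finite product of positive numbers (each factor is positive because $f_p>0$ for every odd $p$). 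Since $D_L^{2\ell-2}\ge1$, the stated bound with the $D_L$ denominator then follows a fortiori.

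The main obstacle is not the mass-theoretic input but the uniformity of $C_\delta$: one must confirm it is a genuine positive constant independent of both $\cond$ and $L$, which hinges on the decay $f_p\to\frac12$ forcing $p^\delta f_p\ge1$ outside a finite set of primes, so that only finitely many factors fall below $1$. The secondary bookkeeping obstacle is checking the local inequality $\frac{r}{2}\ge p^{(\ell-1)\cond_p}f_p$ uniformly across the subcases ($p=3$ versus $p\ge5$, and $p\nmid m$ versus $p\mid m$); the definition of $f_p$ has been calibrated precisely so that each subcase closes, and the $D_L$ factor plays no essential role, merely providing slack.
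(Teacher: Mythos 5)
Your proposal is correct and follows essentially the same route as the paper: verify $\Omega_{2,\cond}(L,\bm u)=\emptyset$ from the gcd condition, feed the bounds of \Cref{lem:formula_modulooddprime} into \Cref{thm:generalclassformula} to get the local factor $p^{(\ell-1)\cond_p}f_p$, and define $C_\delta$ as the finite product over the primes with $p^\delta f_p<1$ (which is exactly the paper's $C_\delta$ in \eqref{eqn:Cdeltadef}, since $S_{\delta,p}\neq\emptyset$ iff $p^\delta f_p<1$). If anything, you are slightly more explicit than the paper in checking that $p\mid Q(\bm u)$ forces $t_p=\ell\ge3$ so that part (2) of the lemma applies.
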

\begin{proof}
Since $\gcd(d_L,Q(\bm{u}),\cond)=1$, \eqref{eqn:Omegacgoodsplit} holds. Let $p\mid \cond$. If $p=3$, then, we have, by \Cref{lem:formula_modulooddprime},
	\[
		r(Q;\bm{u},3^{\cond_{3}})\geq 2\tfrac{3^{(\ell-1)\cond_{3}}}{9}.
	\]
	If $ p\ge 5$, then $1-\frac1p\ge1-\frac{2}{p-1}$ and hence Lemma \ref{lem:formula_modulooddprime} implies that
	\[
		r\left(Q;\bm u,p^{\cond_p}\right) \ge p^{(\ell-1)\cond_p}\left(1-\tfrac1p\right)\left(1-\tfrac{2}{p-1}\right) = p^{(\ell-1)\cond_p}\tfrac{p-3}{p}.
	\]
	Combining these, we have
	\begin{equation}\label{eq:part1r}
		\prod_{p\in\Omega_{1,\cond}(\bm{u})} \frac{r\left(Q;\bm{u},p^{\cond_p}\right)}{2} \ge \prod_{p\in\Omega_{1,\cond}(\bm{u})} p^{(\ell-1)\cond_p}f_p.
	\end{equation}
	Plugging \eqref{eq:part1r} and \eqref{eqn:MassRelate} into Theorem \ref{thm:generalclassformula} thus yields
 	\begin{equation}\label{eqn:mainfirstclaim}
 		\frac{m\left(L+\frac{\bm{u}}{\cond}\right)}{m(L)} \ge \prod_{p\mid \cond} \frac{r\left(Q;\bm{u},p^{\cond_p}\right)}{2} \ge \prod_{p\mid \cond}p^{(\ell-1)\cond_p}f_p = \cond^{\ell-1} \prod_{p\mid \cond} f_p.
 	\end{equation}
This is the first claim.

For the second assertion, we claim that for $\d>0$ there exists a constant $C_{\delta}$ such that
	\begin{equation}\label{eq:prodfp}
		\prod_{p\mid \cond}f_p \geq \frac{C_\d}{\cond^\d}.
	\end{equation}
	Set $S_{\delta,p}:=\{r\in\N: p^{r\delta} f_p <1\}$. Then, since $p^{r\delta}$ is increasing in $r$, we have
	\[
		\cond^\delta \prod_{p\mid\cond} f_p = \prod_{p^r\|\cond} p^{r\delta} f_p \ge \prod_{\substack{p\text{ prime}\\S_{\delta,p}\ne\emptyset}} \min_{r\in S_{\delta,p}} p^{r\delta} f_p = \prod_{\substack{p\text{ prime}\\S_{\delta,p}\ne\emptyset}} p^\delta f_p.
	\]

	Let
	\begin{equation}\label{eqn:Cdeltadef}
		C_{\delta}:=\prod_{\substack{p\text{ prime}\\ S_{\delta,p}\neq \emptyset}} p^{\delta} f_p
	\end{equation}
	and note that the product is finite. Note that $C_\d$ only depends on $\d$ and \eqref{eq:prodfp} is proved. 
	
	Plugging \eqref{eq:prodfp} into \eqref{eqn:mainfirstclaim} yields 
	\[
		\frac{m\left(L+\frac{\bm{u}}{\cond}\right)}{m(L)} \ge \cond^{\ell-1} \prod_{p\mid\cond} f_p \ge C_\delta \cond^{\ell-1-\delta}.\qedhere
	\]
\end{proof}

We are now ready to prove \Cref{thm:diagonalclassformulaintro}.

\begin{proof}[Proof of Theorem \ref{thm:diagonalclassformulaintro}]
	The theorem follows directly from \Cref{thm:diagonalclassformula}.
\end{proof}

We next conclude Theorem \ref{thm:classbound}.

\begin{proof}[Proof of Theorem \ref{thm:classbound}]
By the first inequality in \eqref{eqn:hlower}, we have
$h_{L+\bm{\frac u\cond}} \ge m(L+\frac{\bm u}{\cond})$, so the result follows directly from Theorem \ref{thm:diagonalclassformulaintro}.
\end{proof}

\noindent We are now ready to prove an explicit version of \Cref{cor:ClassNumberOne}, giving the bounds $\cond_{\ell}$ in \Cref{tab:ClassNumberOne}.
\begin{longtable}{|c||c|c|c|c|c|c|c|c|}
\hline
$\ell$& 3&4&5&6&7&8&9&10\\
\hline
$c_{\ell}$&81&49&23&23&23&33&23&19\\
\hline
\caption{Bounds $\cond\leq \cond_{\ell}$ for conductors of class number one shifted lattices of rank $3\leq \ell\leq 10$.}\label{tab:ClassNumberOne}
\end{longtable}

\begin{proof}[Proof of \Cref{cor:ClassNumberOne}]
By \eqref{eqn:hlower} and Theorem \ref{thm:diagonalclassformulaintro}, we have
	\begin{equation}\label{eqn:hlowereval}
		h_{L+\frac{\bm{u}}{\cond}}\geq m\left(L+\frac{{u}}{\cond}\right) \ge m(L) C_{\delta} \cond^{\ell-1-\delta}.
	\end{equation}
	Since $h_L\le h_{L+\frac u\cond}$ by \cite[Corollary 4.4]{chan_representations_2013}, if $h_{L+\frac u\cond}=1$, then $h_L=1$, so we may restrict to those $L$ with $h_L=1$ classified by Watson \cite{WatsonSmallClassNumber,Watson1,Watson3,Watson2} (see Lorch and Kirschmer's tables in \cite[Subsection 7.1]{KL1} for a complete and explicit enumeration).\\
	(1) Consider diagonal ternary quadratic lattices (i.e., $\ell=3$) $L$ with class number one. Note that, for all $L$ with $h_L=1$, 
	\[
		\frac{1}{m(L)} = |O(L)| \le 48 \quad\text{ and }\quad d_L \le 1728.
	\]
	This follows from Watson's classification \cite{Watson3} of ternary quadratic forms with class number one.
	Plugging this into \eqref{eqn:hlowereval}, if $h_{L+\frac{\bm{u}}{\cond}}=1$, then we conclude that
	\begin{equation}\label{eqn:condbound}
		\cond^{2-\d} \leq \frac{1}{C_{\d}m(L)}\leq \frac{48}{C_{\d}}.
	\end{equation}
	It remains to show a lower bound for $C_\d$. Choosing $\d=\frac12$, we compute that $S_{\frac12,p}=\emptyset$ for $p>7$ and
	\begin{equation}\label{eqn:C1/2bound}
		C_{\frac{1}{2}}\geq 0.065.
	\end{equation}
	Plugging into \eqref{eqn:condbound}, we have
	\[
		\cond^{\frac{3}{2}}\leq \frac{48}{0.065},
	\]
	which implies that $\cond\le 81$ (note that $\cond$ is odd).\\
	(2) The full lists of lattices of class number one for $4\le\ell\le10$ were stated in \cite[Section 7]{KL1}. Moreover, the determinant and $\frac{1}{m(L)}$ were given for each class number one lattice, which are precisely the quantities that we require to obtain an upper bound on $c$. In the case $\ell=4$, \cite[Table 2]{KL1} and the table in \cite[Subsubsection 7.1.7]{KL1} yield that if $L$ has class number one, then
	\[
		\frac{1}{m(L)}\leq 1152\quad\text{ and }\quad d_L\leq 2^{4}\cdot3^{3}\cdot11^3.
	\]
	Thus if $h_{L+\frac{\bm u}{\cond}}=1$, then \eqref{eqn:hlowereval} implies that
	\[
		\cond^{3-\d} \le \frac{1}{C_\d m(L)} \le \frac{1152}{C_\d}.
	\]
	Choosing $\d=\frac12$ and plugging in \eqref{eqn:C1/2bound} thus yields $\cond^\frac52\le17723$. This implies that $\cond\le49$.\\
	(3) Suppose that $\ell=5$. In this case, \cite[Table 2]{KL1} and the table in \cite[Subsubsection 7.1.6]{KL1} yield that if $L$ has class number one, then 
	\[
		\frac{1}{m(L)}\leq 3840\quad\text{ and }\quad d_L\leq 2^{12} \cdot 7^{4}.
	\]
	Thus if $h_{L+\frac{\bm{u}}{\cond}}=1$, then \eqref{eqn:hlowereval} implies that
	\[
		\cond^{4-\d} \le \frac{d_L^8}{C_\d m(L)} \le \frac{3840}{C_\d}.
	\]
	Choosing $\d=\frac12$ and plugging in \eqref{eqn:C1/2bound} thus yields $\cond^{\frac{7}{2}}\leq 59077$, which gives that $\cond\leq 23$.\\
	(4) Suppose that $\ell=6$. In this case, \cite[Table 2]{KL1} and the table in \cite[Subsubsection 7.1.5]{KL1} yield that if $L$ has class number one, then 
	\[
		\frac{1}{m(L)}\leq 103680\quad\text{ and }\quad d_L\leq 23^5.
	\]
	Thus if $h_{L+\frac{\bm{u}}{\cond}}=1$, then \eqref{eqn:hlowereval} implies that
	\[
		\cond^{5-\d} \le \frac{1}{C_\d m(L)} \le \frac{103680}{C_\d}.
	\]
	Choosing $\d=\frac12$ and plugging in \eqref{eqn:C1/2bound} thus yields $\cond^{\frac{9}{2}}\leq 1595077$, which implies that $\cond\leq 23$.\\
	(5) Suppose that $\ell=7$. In this case, \cite[Table 2]{KL1} and the table in \cite[Subsubsection 7.1.4]{KL1} yield that if $L$ has class number one, then 
	\[
		\frac{1}{m(L)}\leq 2903040\quad\text{ and }\quad d_L\leq 2^{18}3^{6}.
	\]
	Thus if $h_{L+\frac{\bm{u}}{\cond}}=1$, then \eqref{eqn:hlowereval} implies that
	\[
		\cond^{6-\d} \le \frac{1}{C_\d m(L)} \le \frac{2903040}{C_\d}.
	\]
	Choosing $\d=\frac12$ and plugging in \eqref{eqn:C1/2bound} thus yields $\cond^{\frac{11}{2}}\leq 44662154$, which implies that $\cond\leq  23$.\\
	(6) Suppose that $\ell=8$. In this case, \cite[Table 2]{KL1} and the table in \cite[Subsubsection 7.1.3]{KL1} yield that if $L$ has class number one, then 
	\[
		\frac{1}{m(L)}\leq 696729600\quad\text{ and }\quad d_L\leq 3^{14}.
	\]
	Thus if $h_{L+\frac{\bm{u}}{\cond}}=1$, then \eqref{eqn:hlowereval} implies that
	\[
		\cond^{7-\d} \le \frac{1}{C_\d m(L)} \le \frac{696729600}{C_\d}.
	\]
	Choosing $\d=\frac12$ and plugging in \eqref{eqn:C1/2bound} thus yields $\cond^{\frac{13}{2}}\leq 10718916924$ which implies that $\cond\leq 33$.\\
	(7) Suppose that $\ell=9$. In this case, \cite[Table 2]{KL1} and the table in \cite[Subsubsection 7.1.2]{KL1} yield that if $L$ has class number one, then 
	\[
		\frac{1}{m(L)}\leq 1393459200\quad\text{ and }\quad d_L\leq 2^{24}.
	\]
	Thus if $h_{L+\frac{\bm{u}}{\cond}}=1$, then \eqref{eqn:hlowereval} implies that
	\[
		\cond^{8-\d} \le \frac{1}{C_\d m(L)} \le \frac{1393459200}{C_\d}.
	\]
	Choosing $\d=\frac12$ and plugging in \eqref{eqn:C1/2bound} thus yields $\cond^{\frac{15}{2}}\leq 21437833847$, which gives that $\cond\leq 23$.\\
	(8) Finally suppose that $\ell=10$. In this case, from the table in \cite[Subsubsection 7.1.1]{KL1} there are two class number one lattices $L$ with determinant $d_L=3^9$ and 
	\[
		\frac{1}{m(L)}=8360755200.
	\]
	Thus if $h_{L+\frac{\bm{u}}{\cond}}=1$, then \eqref{eqn:hlowereval} implies that
	\[
		\cond^{9-\d} \le \frac{1}{C_\d m(L)} \le \frac{8360755200}{C_\d}.
	\]
	Choosing $\d=\frac12$ and plugging in \eqref{eqn:C1/2bound} yields $\cond^{\frac{17}{2}}\leq 128627003077$, which implies that $\cond\leq 19$.
\end{proof}


\end{document}